\newtheorem {theorem}{Theorem}[section]
\newtheorem {proposition}{Proposition}[section]
\newtheorem {lemma}{Lemma}[section]
\newtheorem {example}{Example}[section]
\newtheorem {definition}{Definition}[section]
\newtheorem {remark}{Remark}[section]
\newtheorem {assumption}{Assumption}[section]
\newcommand{\R}{{\mathbb R}}
\newcommand{\N}{\mathbb{N}}
\newcommand{\argmin}{\operatornamewithlimits{argmin}}
\newcommand{\leqnomode}{\tagsleft@true}
\newcommand{\reqnomode}{\tagsleft@false}
\def\ees{{\accent"5E e}\kern-.385em\raise.2ex\hbox{\char'23}\kern-.08em}
\def\EES{{\accent"5E E}\kern-.5em\raise.8ex\hbox{\char'23 }}
\def\ow{o\kern-.42em\raise.82ex\hbox{
   \vrule width .12em height .0ex depth .075ex \kern-0.16em \char'56}\kern-.07em}
\def\OW{O\kern-.460em\raise1.36ex\hbox{
\vrule width .13em height .0ex depth .075ex \kern-0.16em \char'56}\kern-.07em}
\title{Multi-objective convex polynomial optimization and semidefinite programming relaxations$^*$}
\thanks{$^*$Jae Hyoung Lee was supported by the National Research Foundation of Korea (NRF) Grant funded by the Korea government (MSIP) (NRF-2018R1C1B6001842).
Nithirat Sisarat was partially supported by a grant from the Thailand Research Fund through the Royal Golden Jubilee Ph.D. Program grant number PHD/0026/2555.
Liguo Jiao was supported by Jiangsu Planned Projects for Postdoctoral Research Funds 2019 (no. 2019K151).}
\author{Jae Hyoung Lee}
\address[Jae Hyoung Lee]{Department of Applied Mathematics, Pukyong National University, Busan, 48513, Korea}
\email{mc7558@naver.com}
\author{Nithirat Sisarat}
\address[Nithirat Sisarat]{Department of Mathematics, Faculty of Science, Naresuan University, Phitsanulok 65000, Thailand}
\email{nithirats@hotmail.com}
\author{Liguo Jiao$^{\dag}$}
\address[Liguo Jiao]{School of Mathematical Sciences, Soochow University, Suzhou 215006, Jiangsu Province, China}
\email{hanchezi@163.com}
\thanks{$^{\dag}$Corresponding Author}
\date{\today}
\begin{document}

\begin{abstract}
This paper aims to find efficient solutions to a multi-objective optimization problem \eqref{modelsetting} with convex polynomial data.
To this end, a hybrid method, which allows us to transform problem~\eqref{modelsetting} into a scalar convex polynomial optimization problem~\eqref{hybridmodel} and does not destroy the properties of convexity, is considered.
First, we show an existence result for efficient solutions to problem~\eqref{modelsetting} under some mild assumption.
Then, for problem~\eqref{hybridmodel}, we establish two kinds of representations of non-negativity of convex polynomials over convex semi-algebraic sets, and propose two kinds of finite convergence results of the Lasserre-type hierarchy of semidefinite programming relaxations for problem~\eqref{hybridmodel} under suitable assumptions.
Finally, we show that finding efficient solutions to problem~\eqref{modelsetting} can be achieved successfully by solving hierarchies of semidefinite programming relaxations and checking a flat truncation condition.
\end{abstract}
\subjclass[2010]{90C29, 90C22, 90C25, 12D15}
\keywords{Multi-objective optimization, hybrid method, convex polynomial optimization, semidefinite programming,  sum-of-squares of polynomials, flat truncation condition}
\maketitle

\section{Introduction}
\label{intr}
{\leqnomode{
The multi-objective optimization is used to denote a type of optimization problems, where two or more objectives are to be minimized over certain constraints.
In this paper, we are interested in the study of a multi-objective programming problem with {\it convex} polynomial data reading as follows:
\begin{align}\label{modelsetting}
\mbox{Min$_{\R^p_+}$} f(x) \quad \mbox{ subject to } \ g_i(x) \leq0, \ i=1,\ldots,m, \tag{MP}
\end{align}
where $f(x):= \big(f_1(x),\ldots,f_p(x)\big),$ $f_j\colon \R^n \to \R,$ $j=1,\ldots,p,$ and $g_i\colon \R^n \to \R,$ $i=1,\ldots,m,$ are convex polynomials, and $\R^p_+$ stands for the non-negative orthant of $\R^p;$
moreover, ``${\rm Min}_{\mathbb{R}^p_+}$" in problem~\eqref{modelsetting} is understood with respect to the ordering cone $\mathbb{R}^p_+.$
We denote the feasible set of problem~\eqref{modelsetting} by
}}
\begin{equation}\label{feasibleset}
K:=\{x\in\R^n: g_i(x) \leq0, \ i=1,\ldots,m\},
\end{equation}
which is assumed to be nonempty (not necessarily compact) throughout this paper.

It is worth noting that, in general, there is no single optimal solution that simultaneously minimizes all the objective functions.
In these cases, one aims to look for the ``best preferred" solution, in contrast to an optimal solution.
This leads to the concept of the so-called {\it efficient} (or {\it Pareto-optimal}) solutions in multi-objective programming.
In fact, the efficient solutions are the ones that cannot be improved in one objective function without deteriorating their performance in at least one of the rest.
Now, we state the concept of (strictly) efficient solutions to problem~\eqref{modelsetting}.

\begin{definition}{\rm
A point $\bar x\in K$ is said to be an \emph{efficient solution} (known also as Pareto-optimal solution) to problem~\eqref{modelsetting} if
\begin{equation*}
f(x)-f(\bar x)\not\in-\R^p_+\backslash\{0\}, \ \ \forall x\in K.
\end{equation*}
In addition, if $\bar x$ is an efficient solution to problem~\eqref{modelsetting}, then $f(\bar x) \in \R^p$ is called a Pareto value to problem~\eqref{modelsetting},
and all Pareto values form the {\it Pareto curve}.
}\end{definition}

\subsection{Scalarization methods}
Usually, the problem~\eqref{modelsetting} can be solved (i.e., its efficient solutions could be found) by solving related single objective programming problems.
We call such a method by {\it scalarization approach}, and there are many scalarization approaches, for example, the weighted sum method, the $\epsilon$-constraint method, and the hybrid method (see~\cite{Chankong1983,Ehrgott2005}).
Below, we first describe the weighted sum method and the $\epsilon$-constraint method, and show their powerful aspects for solving some special cases of problem~\eqref{modelsetting}. Then, we minutely discuss the hybrid method, which will be used in the paper to solve problem~\eqref{modelsetting}.

[Weighted sum method].
The idea of this method is to associate each objective function with a weighting non-negative coefficient and then minimize the weighted sum of the objectives.
This method is a simple way to generate different efficient solutions, however, there are parameters as many as the number of objective functions, and this may be not easy to control with the parameters if the considered multi-objective optimization problem has a great number of objective functions.
Further, this method is effective to deal with convex cases, but for the nonconvex cases, it may go awry.
Besides, for a given non-dominated point, it is usually not easy to find a corresponding desired parameter.
In other words, it may be not easy to set parameters to obtain a non-dominated point from a desired region of the image space (i.e., the corresponding efficient solutions in the feasible set).

Nevertheless, the weighted sum method has been shown to be effective to solve some special case of problem~\eqref{modelsetting}.
For example, if the involved functions are linear (see Blanco et al.~\cite{Blanco2014}), then we call problem~\eqref{modelsetting} a {\it multi-objective linear programming} (MLP, for short).
An MLP problem has become a very important area of activity within the optimization field since the 1960s for its relevance in practice and as a mathematical topic in its own right; see \cite{Blanco2014,Luc2016}.
The development of MLPs has come in parallel to the scalar counterpart~\cite{Ploskas2017}, and its theory and algorithms have been developed continuously over the years.
It is worth noting that, in 2014, Blanco et al. \cite{Blanco2014} presented a new method to solve MLPs, and this new method was based on a transformation of any MLPs into a unique lifted semidefinite programming (SDP); however, we would emphasize here that before their new method works, the weighted sum method is used to transform the MLPs into a scalar linear programming problem.

[$\epsilon$-Constraint method].
This method consists of minimizing one of the objective functions and all the other objective functions are transformed into constraints by setting an upper bound to each of them.
Notwithstanding the fact that, in order to find efficient solutions by this method, each transformed single objective optimization problem, as many as the number of the objective functions, shall be solved; or the optimal solution to at least one single objective optimization problem should be unique.
The $\epsilon$-constraint method was still proved to be useful to solve some special case of problem~\eqref{modelsetting}.
For instance, if the involved functions are SOS-convex polynomial\footnote{A polynomial $f$ is called {\it SOS-convex} if there exists a matrix polynomial $F(x)$ such that the Hessian $\nabla^2f(x)$ factors as $F(x)F(x)^T;$ see~\cite{Ahmadi2012,Ahmadi2013,Helton2010}.}, then based on its special structures owned by any SOS-convex polynomials, in \cite{Jiao2019,Jiao2019b,Lee2018,Lee2019}, the authors employed the $\epsilon$-constraint method to transform this special case of problem~\eqref{modelsetting} into a class of scalar ones;
moreover, since the $\epsilon$-constraint method did not destroy the SOS-convex properties of a polynomial, along with these facts, an {\it exact SDP relaxation approach} was used to find the optimal solutions to the corresponding scalar problems, then efficient solutions to the special case of problem~\eqref{modelsetting} can also be found.

[Hybrid method].
In this paper, we are interested in the study of finding efficient solutions to problem~\eqref{modelsetting} with {\it convex polynomial data}, and we do this by transforming problem~\eqref{modelsetting} to a scalar one based on this {\it hybrid method}.
Mathematically speaking, in connection with problem~\eqref{modelsetting}, we consider the following (scalar) convex polynomial optimization problem \cite{Ehrgott2005,Giannessi2000}:
{\leqnomode{
\begin{align}\label{hybridmodel}
&\bar f_z:= \ \hspace{-3mm}\ \min\limits_{x\in\R^n} \ \ \lambda^Tf(x):=\sum_{j=1}^p\lambda_jf_j(x)  \tag{${\rm P}_z$} \\
& \qquad\ \ {\rm s.t.\ } \ \  g_i(x) \leq0, \ i=1,\ldots,m, \nonumber \\
& \qquad \qquad \quad f_j(x)\leq f_j(z), \ j=1,\ldots,p, \nonumber
\end{align}
where $\lambda\in{\rm int\,}\R^p_+$ is fixed and the parameter $z\in\R^n.$
Note that ${\rm int\,}\R^p_+$ stands for the interior of $\R^p_+.$
Let
}}
\begin{equation}\label{hybrid-feasibleset}
K_z:=\{x\in\R^n: g_i(x) \leq0, \ i=1,\ldots,m, \ f_j(x)\leq f_j(z), \ j=1,\ldots,p\}
\end{equation}
 be the (possibly non-compact) feasible set of problem~\eqref{hybridmodel}.
It is worth noting that $\lambda$ here does not play the role of a parameter but be fixed in problem~\eqref{hybridmodel}.
It is also worth mentioning that the feasible set $K_z$ is nonempty whenever the parameter $z$ is selected in the feasible set $K$ of problem~\eqref{modelsetting}.

Actually, the hybrid method can be regarded as the one combining the weighted sum method with the $\epsilon$-constraint method; see, for example~\cite{Ehrgott2005,Giannessi2000}.
In other words, it does not require (i) solving several problems; (ii) considering about uniqueness of an optimal solution to one single objective optimization problem; or (iii) controlling the weighting non-negative parameters.
Even though in the view of computation, this method needs more process and time than the weighted method (since the number of constraints increases), one still may find non-dominated points (corresponding to efficient solutions in the feasible set) from a desired region of the image space, which can be controlled by $\epsilon$-constraint method, in contrast to the weighted sum method.

\subsection{Convex polynomial optimization}
As mentioned in the above, the hybrid method is substantially one of the scalarization approaches, thus it is essential to recall some celebrated results in {\it scalar} (as opposed to {\it multi-objective}) polynomial optimization.

Indeed, if one restricts oneself to polynomial optimization (not necessarily convex), then one may approximate the optimal value and an optimal solution to a polynomial optimization problem as closely as desired, and sometimes solve such a polynomial optimization problem exactly; see \cite{Lasserre2001,Lasserre2009,Nie2013}.
Moreover, polynomial optimization problems have attracted a lot of attention in theoretical and applied mathematics over the years; see, for example, the related monographs~\cite{Ha2017,Lasserre2010,Lasserre2015}.
Real algebraic geometry and semi-algebraic geometry are sub-fields in algebra that are strongly related to polynomial optimization problems; see~\cite{Lasserre2015}.
Since these problems are, in general, very difficult, it is a natural choice to look for tractable relaxations.
These relaxations are often based on some variant of a ``Positivstellensatz" for given semi-algebraic sets \cite{Putinar1993,Putinar1999,Schmudgen1991}.
Many researchers have proposed hierarchies of such relaxations that are based on moment and sum-of-squares approximations of the original problem, and give semidefinite programming (SDP) problems.
For instance, under certain conditions, Lasserre \cite{Lasserre2001} proposed a hierarchy of SDP problems that converge with their optimal values to the optimal value of the original polynomial optimization problem, see also \cite{Schweighofer2005}.

The reasons why we restrict us to {\it convex} polynomial data are
(i) under convexity, the hierarchy of semidefinite programming relaxations for polynomial optimization simplifies and has finite convergence, a highly desirable feature as convex problems are in principle easier to solve; see Lasserre~\cite{Lasserre2009} for more details;
(ii) the Lasserre hierarchy of SDP relaxations with a slightly extended quadratic module for {\it convex} polynomial optimization problems always converges asymptotically even in the case of non-compact semi-algebraic feasible sets; see, \cite{Jeyakumar2014}. Moreover, as Jeyakumar {\it et al.}~\cite{Jeyakumar2014} pointed out, the positive definiteness of the Hessian of the associated Lagrangian at a saddle-point guarantees the finite convergence of the hierarchy.

\subsection{Contributions}
In 2012, Gorissen and Hertog \cite{Gorissen2012} considered a class of optimization problems with multiple {\it linear} objectives and {\it linear} constraints and used adjustable robust optimization and polynomial optimization as tools to approximate the {\em Pareto set} with polynomials of arbitrarily large degree. Later, some further results on approximating {\em Pareto curves} using semidefinite relaxations were studied by Magron et al. \cite{Magron2014}. 
In our work, we will focus on the issue on finding/approximating efficient/Pareto-optimal {\it solutions} to problem \eqref{modelsetting}.
Besides, comparing with our previous works (e.g., \cite{Jiao2019,Jiao2019b,Jiao2020,Lee2018,Lee2019}), we further  drop the assumption on the SOS-convexity of the involved polynomials, but instead by convex polynomials. 
All in all, we will make the following contributions to the multi-objective programming problem with {\it convex} polynomial data in this paper.
\begin{itemize}
\item First, we establish an existence result for efficient solutions to problem~\eqref{modelsetting} under some mild assumption.
\item Second, for problem~\eqref{hybridmodel}, we give two kinds of representations of non-negativity of convex polynomials over convex semi-algebraic sets.
\item Third, we formulate two kinds of Lasserre-type hierarchies of SDP relaxations for problem~\eqref{hybridmodel} and give finite convergence results for the hierarchy of SDP relaxations.
\item Last, under the flat truncation condition, we show a scheme on finding efficient solutions to the problem \eqref{modelsetting}.
\end{itemize}

The rest of this paper is arranged as follows.
In Sect.~\ref{sect:2}, we recall some notations and celebrated results that will be widely used throughout the paper.
We establish an existence result for efficient solutions to problem~\eqref{modelsetting} under some mild assumption in Sect.~\ref{sect:3}.
For problem~\eqref{hybridmodel}, we show two kinds of representations of non-negativity of convex polynomials over convex semi-algebraic sets; moreover, we formulate two kinds of Lasserre-type hierarchies of SDP relaxations for problem~\eqref{hybridmodel} and give their finite convergence results in Sect.~\ref{sect:4}.
Section~\ref{sect:5} provides a scheme on finding efficient solutions to problem~\eqref{modelsetting} under the flat truncation condition.
Finally, conclusions and further remarks are given in Sect.~\ref{sect:6}.

\section{Preliminaries}\label{sect:2}

We begin this section by fixing some notations and preliminaries.
We suppose $1 \leq n \in \N$ ($\N$ is the set of non-negative integers) and abbreviate $(x_1, x_2, \ldots, x_n)$ by $x.$
The Euclidean space $\R^n$ is equipped with the usual Euclidean norm $\| \cdot \|$ and the inner product $\langle \cdot, \cdot \rangle,$ here $\langle x, y \rangle :=\sum_{i = 1}^n x_iy_i,$ for any $x, y \in \R^n.$
The non-negative orthant of $\R^n$ is denoted by $\R_{+}^n.$

The space of all real polynomials in the variable $x$ is denoted by $\R[x].$
Moreover, the space of all real polynomials in the variable $x$ with degree at most $d$ is denoted by $\R[x]_d.$
The degree of a polynomial $f$ is denoted by $\deg f.$
We say that a real polynomial $f$ is sum-of-squares if there exist real polynomials $q_l,$ $l = 1,\ldots,r,$ such that $f =\sum_{l=1}^{r}q_{l}^2.$
The set consisting of all sum-of-squares real polynomials is denoted by $\Sigma[x].$
In addition, the set consisting of all sum-of-squares real polynomials with degree at most $d$ is denoted by $\Sigma[x]_{d}.$
Given polynomials $\{g_1,\ldots,g_m\} \subset \R[x],$ a {\it quadratic module} generated by the tuple $g:=(g_1,\ldots,g_m)$ is defined by
\begin{equation*}
\mathcal{Q}(g) := \left\{\sigma_0 + \sum_{i=1}^m\sigma_ig_i: \sigma_i \in \Sigma[x], \ i = 0, 1, \ldots, m\right\}.
\end{equation*}
The set $\mathcal{Q}(g)$ is \emph{Archimedean} if there exists $p\in \mathcal{Q}(g)$ such that the set $\{x \in\R^n: p(x) \geq 0\}$ is compact.

The following result is a celebrated and important representation of positive polynomials over a semi-algebraic set when the quadratic module $\mathcal{Q}(-g)$ is Archimedean.
\begin{lemma}[Putinar's Positivstellensatz]{\rm \cite{Putinar1993}}\label{Putinar}
Let $f$ and $g_i,$ $i=1,\ldots,m,$ be real polynomials on $\R^n.$
Suppose that $\mathcal{Q}(-g)$ is Archimedean.
If $f$ is strictly positive on $K:=\{x\in\R^n: g_i(x) \leq0, \ i=1,\ldots,m\}\neq\emptyset,$ then $f \in\mathcal{Q}(-g).$
\end{lemma}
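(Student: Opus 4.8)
The plan is to argue by contradiction via a separation argument followed by a Gelfand--Naimark--Segal (GNS) type construction that produces a representing measure. Throughout write $M := \mathcal{Q}(-g)$, so that $M$ is the convex cone $\{\sigma_0 + \sum_{i=1}^m \sigma_i(-g_i) : \sigma_i \in \Sigma[x]\}$ and $K = \{x : -g_i(x) \geq 0, \ i=1,\ldots,m\}$. The first thing I would extract from the hypothesis that $M$ is Archimedean is the \emph{order-unit} property: there is $N_0$ with $N_0 - \|x\|^2 \in M$, and consequently for every $q \in \R[x]$ there is $N$ with $N - q \in M$ (and hence also $N + q \in M$). Equivalently, the constant polynomial $1$ is an algebraic interior (core) point of $M$, since $1 \pm q/N \in M$ for $N$ large. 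This is the standard reformulation of the definition of Archimedean given in the excerpt, and the only place where the compactness of some $\{p \geq 0\}$ is actually used.

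Suppose, for contradiction, that $f \notin M$. Since $M$ is a convex cone possessing the core point $1$ and $f \notin M$, the algebraic (Eidelheit) form of the Hahn--Banach separation theorem yields a nonzero linear functional $L : \R[x] \to \R$ with $L \geq 0$ on $M$ and $L(f) \leq 0$. The order-unit property forces $L(1) > 0$: if $L(1) = 0$, then applying $L \geq 0$ to $N \pm q \in M$ gives $\pm L(q) \geq 0$ for every $q$, so $L \equiv 0$, a contradiction. Normalize $L(1) = 1$. Because every square lies in $M$ (take $\sigma_0 = p^2$ and the other $\sigma_i = 0$), we obtain $L(p^2) \geq 0$ for all $p \in \R[x]$; that is, the symmetric bilinear form $\langle p, q\rangle := L(pq)$ is positive semidefinite.

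Next I would run the GNS construction on this form. Quotient $\R[x]$ by the kernel $\{p : L(p^2) = 0\}$ and complete to obtain a Hilbert space $H$ in which the class of $1$ is a distinguished cyclic vector $\xi$ satisfying $L(q) = \langle q(X)\xi, \xi\rangle$. Multiplication by each coordinate $x_i$ descends to a symmetric operator $X_i$ on $H$, and the $X_i$ commute. Here the Archimedean assumption does the decisive work: from $N_0 - \|x\|^2 \in M$ one gets $L(\|x\|^2 p^2) \leq N_0\, L(p^2)$, so $\sum_i X_i^2 \preceq N_0 I$ is bounded, whence each $X_i$ is a bounded self-adjoint operator. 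By the spectral theorem for the commuting family $(X_1,\ldots,X_n)$ there is a projection-valued measure $E$ supported in the compact joint spectrum contained in $\{\|x\|^2 \leq N_0\}$, and setting $\mu(B) := \langle E(B)\xi, \xi\rangle$ produces a positive Borel measure with $L(q) = \int q\, d\mu$ for all $q \in \R[x]$ and total mass $\mu(\R^n) = L(1) = 1$. Moreover, for each $i$ and every $p$ we have $p^2(-g_i) \in M$, so $\int (-g_i)\, p^2\, d\mu \geq 0$; this says $-g_i(X) \succeq 0$ as a self-adjoint operator, which through the spectral calculus forces $\mu(\{-g_i < 0\}) = 0$. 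Intersecting over $i$ gives $\operatorname{supp}\mu \subseteq K$.

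With the representing measure in hand the contradiction is immediate: $\operatorname{supp}\mu$ is a compact subset of $K$ on which the continuous function $f$ is strictly positive, so $c := \min_{\operatorname{supp}\mu} f > 0$ and $L(f) = \int f\, d\mu \geq c\,\mu(\R^n) = c > 0$, contradicting $L(f) \leq 0$. Therefore $f \in M = \mathcal{Q}(-g)$. I expect the main obstacle to be the analytic core of the third paragraph: justifying that the multiplication operators are genuinely bounded and \emph{self-adjoint} (not merely symmetric), so that the spectral theorem applies, and then correctly reading off from $-g_i(X) \succeq 0$ that the scalar measure $\mu$ is supported inside $K$. By comparison, the separation step is routine once the Archimedean hypothesis has been converted into the statement that $1$ is a core point of $M$.
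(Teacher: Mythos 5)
The paper contains no proof of this lemma at all --- it is quoted, with a citation, from Putinar's 1993 paper --- so there is no internal argument to compare against. What you have written is, in structure, exactly Putinar's original operator-theoretic proof: Eidelheit separation of $f$ from the cone $\mathcal{Q}(-g)$ using $1$ as a core (order-unit) point, the GNS construction from the resulting functional $L$, boundedness and self-adjointness of the commuting multiplication operators via $L(\|x\|^2p^2)\leq N_0\,L(p^2)$, the joint spectral measure giving a representing probability measure $\mu$, localization of $\operatorname{supp}\mu$ inside $K$ from $L(p^2(-g_i))\geq 0$, and the final contradiction $0\geq L(f)=\int f\,d\mu>0$. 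All of those steps are sound as sketched.

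The one place you are too casual is the very first step. The paper's definition of Archimedean is: there exists $p\in\mathcal{Q}(-g)$ such that $\{x:p(x)\geq 0\}$ is compact. Passing from this to the ball constraint $N_0-\|x\|^2\in\mathcal{Q}(-g)$ is \emph{not} a ``standard reformulation''; it is a genuine lemma (it appears as such in Putinar's paper), and the known proofs require a Positivstellensatz in their own right. Concretely: choose $N_0$ with $N_0-\|x\|^2>0$ on the compact set $\{p\geq 0\}$; then by Schm\"udgen's theorem (equivalently, by W\"ormann's argument via the Krivine--Stengle Positivstellensatz) applied to the preordering generated by the single polynomial $p$, one gets $N_0-\|x\|^2\in\Sigma[x]+p\,\Sigma[x]\subseteq\mathcal{Q}(-g)$. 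The elementary ``subring'' argument you implicitly rely on only proves the easy implication from the ball constraint to the full order-unit property (existence of $N$ with $N\pm q\in\mathcal{Q}(-g)$ for every $q$); it does not bridge the gap from compactness of $\{p\geq 0\}$ to the ball constraint. With that lemma supplied --- or if ``Archimedean'' is read in the order-unit sense, as in much of the literature --- your proof is complete; without it, the hypothesis of the lemma as stated in this paper has not actually been used correctly.
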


For a multi-index $\alpha:=(\alpha_1,\ldots,\alpha_n)\in\N^n,$ let us denote $|\alpha|:=\sum_{i=1}^n\alpha_i$ and $\N^n_d:= \{\alpha \in \N^n :  |\alpha|\leq d\}.$
The notation $x^\alpha$ denotes the monomial $x_1^{\alpha_1}\cdots x_n^{\alpha_n}.$
The canonical basis of $\R[x]_d$ is denoted by
\begin{equation}\label{cano_basis}
v_d(x):=(x^\alpha)_{\alpha\in\N^n_{d}}=(1,x_1,\ldots,x_n,x_1^2,x_1x_2,\ldots,x_n^2,\ldots,x_1^d,\ldots,x_n^d)^T,
\end{equation}
which has dimension $s(d):=\left( \substack{ n+d \\ d }\right).$

Given an $s(2d)$-vector $y:= (y_\alpha)_{\alpha\in\N^n_{2d}}$ with $y_0=1,$ let $M_{d}(y)$ be the moment matrix of dimension $s(d),$ with rows and columns labeled by (\ref{cano_basis}) in the sense that
\begin{equation*}
M_d(y)(\alpha,\beta)=y_{\alpha+\beta}, \ \forall \alpha,\beta\in\N^n_d.
\end{equation*}
For a polynomial $x\mapsto p(x):=\sum_{\gamma\in\N_{w}^n}p_\gamma x^\gamma,$ $M_d(py)$ is the so-called localization matrix defined by
\begin{equation*}
M_d(py)(\alpha,\beta)=\sum_{\gamma\in\N_w^n}p_\gamma y_{\gamma+\alpha+\beta}, \ \forall \alpha,\beta\in\N^n_d.
\end{equation*}

For a symmetric matrix $X,$ $X \succeq 0$ (resp., $X\succ 0$) means that $X$ is positive semidefinite (resp., positive definite).
The gradient and the Hessian of a polynomial $f\in \R[x]$ at a point $\bar x$ are denoted by $\nabla f(\bar x)$ and $\nabla^2f(\bar x),$ respectively.

\section{Existence Results}\label{sect:3}
{\leqnomode{
In this section, we first establish necessary and sufficient conditions for the existence of an efficient solution to problem~\eqref{modelsetting}.

Recall the following (scalar) convex polynomial optimization problem \cite{Ehrgott2005,Giannessi2000} introduced in Sect.~\ref{intr}, which is transformed from problem~\eqref{modelsetting} by the hybrid method:
\begin{align*}
&\bar f_z:= \ \hspace{-3mm}\ \min\limits_{x\in\R^n} \ \ \lambda^Tf(x):=\sum_{j=1}^p\lambda_jf_j(x)  \tag{${\rm P}_z$} \\
& \qquad\ \ {\rm s.t.\ } \ \  g_i(x) \leq0, \ i=1,\ldots,m, \nonumber \\
& \qquad \qquad \quad f_j(x)\leq f_j(z), \ j=1,\ldots,p, \nonumber
\end{align*}
where $\lambda\in{\rm int\,}\R^p_+$ is fixed and the parameter $z\in\R^n,$
the feasible set $K_z$ of problem~\eqref{hybridmodel} is defined as \eqref{hybrid-feasibleset}.
}}

\medskip
The following proposition suggests a way to obtain an efficient solution to problem \eqref{modelsetting} by solving the problem \eqref{hybridmodel}.
\begin{proposition}{\rm \cite[Proposition~12]{Giannessi2000}}\label{proposition1}
Let $f_j\colon \R^n \to \R,$ $j=1,\ldots,p,$ and $g_i\colon \R^n \to \R,$ $i=1,\ldots,m,$ be convex polynomials$,$ consider problem~\eqref{modelsetting} and its associated scalar problem~\eqref{hybridmodel} by hybrid method.
Let $z_0\in K,$ where $K$ is defined as \eqref{feasibleset}.
If $\bar x$ is an optimal solution to problem~$({\rm P}_{z_0}),$ then $\bar x$ is also an optimal solution to problem~$({\rm P}_{\bar x}),$ and so is an efficient solution to problem~\eqref{modelsetting}.
\end{proposition}

Now, we recall a known lemma that shows an important existence result of solutions to (scalar) convex polynomial optimization problems.
\begin{lemma}\label{lemma1} {\rm \cite[Theorem~3]{Belousov2002}}
Let $f_0$ and $g_i,$ $i=1,\ldots,m,$ be convex polynomials on $\R^n.$
Let $K := \{x \in \R^n : g_i(x) \leq0, \ i = 1,\ldots,m\}.$
Suppose that $\inf_{x\in K}f_0(x) >-\infty.$
Then$,$ $\argmin_{x\in K}f_0(x)\neq\emptyset.$
\end{lemma}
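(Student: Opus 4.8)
This statement is a \emph{Frank--Wolfe type} theorem for convex polynomial programs, and its entire content lies in the non-compactness of $K$: a merely convex function bounded below need not attain its infimum (e.g.\ $e^{x}$ on $\R$), so any proof must genuinely use that the data are \emph{polynomials}, not just convex. The plan is to argue by contradiction, extracting from a non-attaining minimizing sequence a recession direction of $K$ along which the objective is flat at its infimal value, and then to reduce the number of variables and induct on $n$.

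First I would set $f^{*}:=\inf_{x\in K}f_{0}(x)>-\infty$ and pick a minimizing sequence $\{x_{k}\}\subset K$ with $f_{0}(x_{k})\to f^{*}$. Since each $g_{i}$ is continuous, $K$ is closed, so if some subsequence of $\{x_{k}\}$ is bounded then a cluster point $\bar x\in K$ satisfies $f_{0}(\bar x)=f^{*}$ and we are done. The only genuine case is $\|x_{k}\|\to\infty$, and after passing to a subsequence I may assume $x_{k}/\|x_{k}\|\to d$ for a unit vector $d$.

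The engine of the argument is an elementary fact: for a convex polynomial $h$ and fixed $x_{0},d$, the map $t\mapsto h(x_{0}+td)$ is a convex univariate polynomial, and such a polynomial that is bounded above on $[0,\infty)$ is non-increasing there, while one bounded on both sides of a half-line is constant (its derivative is a non-decreasing polynomial pinned by a bound, hence a nonpositive constant, and a second bound forces it to vanish). I would apply this along the segments from a fixed feasible $x_{0}$ to $x_{k}$: by convexity $f_{0}$ is bounded on each segment by $\max(f_{0}(x_{0}),\sup_{k}f_{0}(x_{k}))<\infty$ and each $g_{i}$ is bounded above by $0$ there; letting $k\to\infty$ and using $x_{k}-x_{0}\approx\|x_{k}\|d$ gives, for every $t\ge 0$, the bounds $g_{i}(x_{0}+td)\le 0$ and $f^{*}\le f_{0}(x_{0}+td)\le M$. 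Hence $d\in K_{\infty}$ (the recession cone of $K$) and, by the constancy lemma, $f_{0}$ is \emph{constant} and equal to $f^{*}$ along $+d$ from every feasible point. Failure of attainment thus manufactures a flat recession direction of the objective.

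The remaining task is a dimension reduction using $d$, and this is exactly where the main obstacle sits. The natural moves—quotienting by the constancy subspace $V_{0}$ of $f_{0}$, or slicing $K$ by a hyperplane transversal to $d$—each run into trouble: the direction $d$ produced above lies in $K_{\infty}$ but need not lie in $-K_{\infty}$, so it is genuinely one-sided and cannot be cancelled by a subspace quotient, and the projection of $K=\{g_{i}\le 0\}$ along $V_{0}$, while convex, is in general \emph{not} describable by convex polynomials (partial infima of convex polynomials are convex but not polynomial), so the naive reduction leaves the problem class of the induction. The crux is therefore to reconcile the cone structure forced by the inequality constraints with the subspace structure needed for a clean inductive step: one must produce, from the flat one-sided direction $d$, a lower-dimensional convex polynomial program with the same finite value $f^{*}$ whose minimum \emph{is} attained (so that lifting back along $d$ yields a minimizer of the original problem), with the base case $n=0$ trivial. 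I expect the careful treatment of this reduction—keeping the reduced data within the convex-polynomial class while preserving the infimum—to be the entire difficulty, and it is precisely the place where the polynomial hypothesis, fed through the constancy lemma uniformly over $K$, is indispensable; the full details are those of the cited reference.
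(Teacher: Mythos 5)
First, a point of comparison: the paper does not prove this lemma at all --- it is quoted verbatim from Belousov--Klatte \cite[Theorem~3]{Belousov2002} --- so your attempt is being measured as a standalone proof, not against an argument in the paper. As such, it has a genuine gap, and you have located it yourself: everything after ``The remaining task is a dimension reduction'' is a description of a difficulty, not a resolution of it. What you actually establish is this: if the infimum $f^{*}$ is finite and not attained, then there is a unit vector $d$ in the recession cone of $K$ along which each $g_i$ stays $\leq 0$ and $f_0$ is constant along every ray $y+td$, $y\in K$, $t\geq 0$ (your constancy lemma for univariate convex polynomials is correct; note, though, that the constant value is $f_0(y)$, not $f^{*}$ --- constancy along $d$ from an arbitrary feasible point says nothing about $f^{*}$). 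This is not by itself a contradiction: the minimizing sequence only escapes \emph{asymptotically} in the direction $d$, so flatness of $f_0$ along the exact direction $d$ is perfectly compatible with non-attainment. The entire content of the theorem is the reduction step that converts this flat one-sided direction into a lower-dimensional convex polynomial program with the same infimum whose attainment lifts back, and that step is precisely what you declare to be ``the full details of the cited reference.'' A proof whose hardest step is delegated to the reference being proved is not a proof; what you have is the standard (and easier) first half common to all Frank--Wolfe-type arguments.

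To be concrete about why the missing step is where the real work lives: as you correctly observe, $d$ need not lie in $-K_\infty$, so one cannot quotient by a linear subspace, and the projection of $K$ along $d$ is convex and semi-algebraic but in general not closed and not of the form $\{x: h_i(x)\leq 0\}$ with $h_i$ convex polynomials (e.g.\ $K=\{(x,y): 1-xy\leq 0,\ -x\leq 0\}$ projects to the open half-line $\{y>0\}$), so the naive induction leaves the problem class. The way out in Belousov--Klatte is to exploit that each $g_i(x+td)$ and $f_0(x+td)$ is a \emph{polynomial in $t$} whose convexity and boundedness properties force strong structure on its coefficient polynomials in $x$ (roughly: along $d$ each datum is either eventually constant in $t$, with the constant value again a convex polynomial in $x$, or tends to $-\infty$, in which case the corresponding constraint becomes inactive at infinity); this is what keeps the reduced problem inside the class of convex polynomial programs and makes the induction close. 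None of this is sketched in your proposal, so the argument as written does not establish the lemma.
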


As a consequence of Lemma~\ref{lemma1}, the next theorem provides necessary and sufficient conditions for the existence of an efficient solution to problem~\eqref{modelsetting}.
\begin{theorem}[Existence of Efficient Solutions]
Let $f_j\colon \R^n \to \R,$ $j=1,\ldots,p,$ and $g_i\colon \R^n \to \R,$ $i=1,\ldots,m,$ be convex polynomials$,$ consider problem~\eqref{modelsetting}, then the following statements are equivalent$:$
\begin{itemize}
  \item[{\rm (i)}] Problem \eqref{modelsetting} admits an efficient solution.
  \item[{\rm (ii)}] There exists $z_0\in\R^n$ such that $f(K)\cap \left(f(z_0)-\R^p_+\right)$ is a nonempty and bounded set.
\end{itemize}
\end{theorem}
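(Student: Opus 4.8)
The plan is to prove the equivalence by treating the two implications separately, relying on Proposition~\ref{proposition1} (to convert an optimal solution of the scalarized problem into an efficient solution of \eqref{modelsetting}) and on Lemma~\ref{lemma1} (to guarantee attainment of the convex scalar problem despite possible non-compactness of the feasible set). A small observation I would record first is the identity $f(K_z)=f(K)\cap\bigl(f(z)-\R^p_+\bigr)$ for every $z\in\R^n$, which reads directly off the definition \eqref{hybrid-feasibleset} of $K_z$ since $f_j(x)\le f_j(z)$ for all $j$ is exactly the condition $f(x)\in f(z)-\R^p_+$.

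The implication (i)$\Rightarrow$(ii) I expect to be essentially immediate by taking $z_0=\bar x$, where $\bar x$ is an efficient solution. The point is that the very definition of efficiency says precisely that $f(K)\cap\bigl(f(\bar x)-\R^p_+\bigr)$ collapses to the single point $f(\bar x)$: any $y=f(x)$ in this intersection satisfies $f(x)-f(\bar x)\in-\R^p_+$, and efficiency forbids this difference from lying in $-\R^p_+\setminus\{0\}$, forcing $f(x)=f(\bar x)$. Since $f(\bar x)$ itself belongs to the intersection, the set is a nonempty singleton, hence bounded.

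The substantive direction is (ii)$\Rightarrow$(i). Starting from a point $z_0$ with $S:=f(K)\cap\bigl(f(z_0)-\R^p_+\bigr)$ nonempty and bounded, I would first replace $z_0$ by a genuinely feasible parameter: nonemptiness of $S$ supplies some $z_1\in K$ with $f_j(z_1)\le f_j(z_0)$ for all $j$, which matters because Proposition~\ref{proposition1} requires the hybrid parameter to lie in $K$. Passing to the scalar hybrid problem $({\rm P}_{z_1})$, the identity above gives $f(K_{z_1})=f(K)\cap\bigl(f(z_1)-\R^p_+\bigr)\subseteq S$, so $f(K_{z_1})$ is bounded; since $\lambda\in{\rm int}\,\R^p_+$, the convex polynomial $\lambda^Tf$ is then bounded below on the nonempty set $K_{z_1}$, i.e. $\inf_{x\in K_{z_1}}\lambda^Tf(x)>-\infty$. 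At this stage Lemma~\ref{lemma1} applies, because $\lambda^Tf$ and all the constraint functions $g_i$ and $f_j-f_j(z_1)$ are convex polynomials and the infimum is finite; it yields an optimal solution $\bar x$ to $({\rm P}_{z_1})$. Finally, since $z_1\in K$, Proposition~\ref{proposition1} upgrades $\bar x$ to an efficient solution of \eqref{modelsetting}.

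I expect the only real subtlety to be the bookkeeping linking the abstract boundedness of the image set $S$ to boundedness-below of the scalar objective, combined with the replacement of $z_0$ (which need only be in $\R^n$) by a feasible $z_1\in K$ so that the hypotheses of Proposition~\ref{proposition1} are satisfied. It is worth emphasizing that convexity of the data is exactly what makes Lemma~\ref{lemma1} available, so that a finite infimum already guarantees attainment on the possibly non-compact set $K_{z_1}$; without convexity, finiteness of the infimum would not be enough.
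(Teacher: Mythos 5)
Your proof is correct and takes essentially the same route as the paper's: (i)$\Rightarrow$(ii) by observing that efficiency collapses the intersection to the singleton $\{f(\bar x)\}$, and (ii)$\Rightarrow$(i) via the identity $f(K_{z})=f(K)\cap\bigl(f(z)-\R^p_+\bigr)$, boundedness of the image giving a finite optimal value for the scalar hybrid problem, Lemma~\ref{lemma1} for attainment, and Proposition~\ref{proposition1} for efficiency. Your one extra step---replacing $z_0\in\R^n$ by a feasible $z_1\in K$ with $f(z_1)\le f(z_0)$ before invoking Proposition~\ref{proposition1}---is in fact more careful than the paper, which applies that proposition directly to $z_0$ even though its stated hypothesis requires the parameter to lie in $K$ (the paper's argument survives only because nonemptiness of $K_{z_0}$ is all that is really used).
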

\begin{proof}
We start by proving $[{\rm (i)}\Rightarrow{\rm (ii)}].$
To show this, let $\bar x\in K$ be an efficient solution to problem~\eqref{modelsetting}.
Then we have $f(K)\cap \left(f(\bar x)-\R^p_+\right)=\{f(\bar x)\},$ and so, the assertion ${\rm (ii)}$ holds.

Conversely, we first note that $f(K_{z_0})=f(K)\cap \big(f(z_0)-\R^p_+\big).$
From the assertion ${\rm (ii)},$ the image $f(K_{z_0})$ is nonempty and bounded.
So, there exists a positive real number $N$ such that $||f(x)||\leq N$ for all $x\in K_{z_0}.$
It then follows from the Cauchy--Schwarz inequality that for all $x\in K_{z_0},$
\begin{equation*}
\left|\sum_{j=1}^p\lambda_jf_j(x)\right|=\left|\langle \lambda, f(x)\rangle\right|\leq||\lambda||\cdot ||f(x)||\leq ||\lambda||N,
\end{equation*}
and so,  problem~$({\rm P}_{z_0})$ has a finite optimal value.
Hence, in view of Lemma~\ref{lemma1}, there exists at least one optimal solution to problem~$({\rm P}_{z_0}),$ and the conclusion follows by applying Proposition~\ref{proposition1}.
\qed
\end{proof}

It is worth mentioning that necessary and sufficient conditions for the existence of efficient solutions to a multi-objective programming problem, in which the involved functions are locally Lipschitz, were given in \cite{Kim2018}.

Below, we give an example, which shows that the existence result of efficient solutions in the preceding theorem may go awry if the involved functions are {\it not} convex polynomials.
\begin{example}{\rm
Consider $2$-dimensional multi-objective optimization problem
\begin{equation*}
\min_{(x_1,x_2)\in K} \  \big(f_1(x_1,x_2),f_2(x_1,x_2)\big),
\end{equation*}
where $f_1(x_1,x_2)=f_2(x_1,x_2)=(x_1x_2-1)^2+x_2^2$ are (non-convex) polynomials and let $K:=\R^2.$
Note that the image of $K$ under $f=(f_1, f_2)$ is $f(K)=\{(w_1,w_2)\in\R^2: w_1=w_2>0\}.$
So, we see that for any $(z_1,z_2)\in \R^n,$ the section
\begin{equation*}
f(K)\cap \big(f(z_1,z_2)-\R^2_+\big)=\{(w_1,w_2)\in\R^2: 0<w_1=w_2\leq (z_1z_2-1)^2+z_2^2\}
\end{equation*}
is nonempty and bounded, however, it is clear that there is no efficient solution to this problem.
}\end{example}

\begin{remark}{\rm
It is worth noting that, for a multi-objective polynomial optimization problem with same objectives, if the considered polynomial is bound from below but does not attain its infimum, then we can easily verify that ``there is no efficient solution to this problem"; see, e.g., \cite{Fernando2006,Fernando2014,Pham2019,Ueno2008}.
}
\end{remark}

\section{Representation and finite convergence}\label{sect:4}

Let $f_j\colon \R^n \to \R,$ $j=1,\ldots,p,$ and $g_i\colon \R^n \to \R,$ $i=1,\ldots,m,$ be convex polynomials$,$ consider problem~\eqref{modelsetting} and its associated scalar problem~\eqref{hybridmodel} by hybrid method.
Remember that the feasible set of problem~\eqref{hybridmodel} is as follows,
\begin{equation*}
K_z:=\{x\in\R^n: g_i(x) \leq0, \ i=1,\ldots,m, \ f_j(x)\leq f_j(z), \ j=1,\ldots,p\},
\end{equation*}
which is indeed a (basic closed) convex semi-algebraic set in $\R^n$.
By a basic closed semi-algebraic set in $\R^n,$ we mean a set that is defined by finitely many polynomial equalities and inequalities.

In this section, to deal with problem~\eqref{hybridmodel}, we will provide two kinds of representations of non-negativity of convex polynomials over convex semi-algebraic sets.
In addition, we formulate two kinds of Lasserre-type hierarchies of SDP relaxations for problem~\eqref{hybridmodel} and establish their finite convergence results, respectively.

\subsection{Representations of non-negativity of convex polynomials over convex semi-algebraic sets}

Let $z\in K$ be given.
Then, we define the quadratic module $\mathcal{Q}$ generated by the tuples $-g:=(-g_1,\ldots,-g_m)$ and $-f_z:=\Big(-\big(f_1-f_1(z)\big),\ldots,-\big(f_p-f_p(z)\big)\Big)$  as
{\small
\begin{eqnarray*}
\mathcal{Q}(-g,-f_z):=\left\{ \sigma_0-\sum_{i=1}^m\sigma_ig_i-\sum_{j=1}^p\bar\sigma_j\left(f_j-f_j(z)\right)\colon \sigma_0, \ \sigma_1, \ldots, \sigma_m, \ \bar\sigma_1, \ldots, \bar\sigma_p \in \Sigma[x] \right\}.
\end{eqnarray*}
}
Similarly, we define the following special quadratic module generated by the tuples $-g,$ $-f_z$ and an additional polynomial $-\lambda^Tf_z:=-\big(\lambda^Tf-\lambda^Tf(z)\big)$ as
{\small 
\begin{eqnarray*}
&&\mathcal{M}(-g,-f_z,-\lambda^Tf_z)\\
&:=&\left\{ \sigma_0-\sum_{i=1}^m\mu_ig_i-\sum_{j=1}^p\nu_j\left(f_j-f_j(z)\right)-\sigma(\lambda^Tf_z)\colon \sigma,\sigma_0\in \Sigma[x], \  \mu\in\R^m_+, \ \nu\in\R^p_+\right\}.
\end{eqnarray*}
}
Clearly, the module $\mathcal{M}(-g,-f_z,-\lambda^Tf_z)$ is a subset of the quadratic module $\mathcal{Q}(-g,-f_z).$

In connection with problem \eqref{hybridmodel}, we define the Lagrangian-type function $L_z \colon \R^n\times\R^m_+\times\R^p_+\to\R$ as follows:
\begin{equation}\label{Lagrangian function}
L_z(x, \mu,\nu) = \lambda^Tf(x) +\sum_{i=1}^m\mu_i g_i(x)+\sum_{j=1}^p\nu_j\big(f_j(x)-f_j(z)\big).
\end{equation}
\begin{definition}{\rm
We say that the triplet $(\bar x,\bar\mu,\bar \nu)\in K_{z} \times \R^m_+\times\R^p_+$ is a \emph{saddle point} of the Lagrangian-type function $L_z$ defined in \eqref{Lagrangian function}, if the following inequality holds:
\begin{equation*}
L_z(x,\bar \mu,\bar \nu)\geq L_z(\bar x,\bar \mu,\bar \nu) \geq L_z(\bar x,\mu,\nu), \  \forall x \in \R^n, \  \mu \in \R^m_+, \ \nu \in \R^p_+.
\end{equation*}}
\end{definition}

The following lemma, which plays a key role in deriving the desired results, shows that a convex polynomial with positive definiteness of its Hessian at some point is strictly convex and coercive.
\begin{lemma}{\rm(see \cite[Lemma~3.1]{Jeyakumar2014})}\label{lemma2}
Let $f_0$ be a convex polynomial on $\R^n.$
If $\nabla^2f_0 (x_0) \succ 0$ at some point $x_0 \in \R^n,$ then $f_0$ is coercive and strictly convex on $\R^n.$
\end{lemma}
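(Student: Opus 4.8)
The plan is to reduce both assertions to one-variable restrictions of $f_0$ along lines and to exploit the rigidity of polynomials (a polynomial that is affine on a segment is affine on the whole line). Fix the point $x_0$ with $\nabla^2 f_0(x_0)\succ0$. For a direction $v\in\R^n\setminus\{0\}$, set $\phi_v(t):=f_0(x_0+tv)$; this is a univariate polynomial (composition with an affine map) and is convex because $f_0$ is convex. Its second derivative satisfies $\phi_v''(0)=v^{T}\nabla^2 f_0(x_0)v>0$, so $\phi_v$ is \emph{not} affine. A non-affine convex univariate polynomial necessarily has even degree at least $2$ with positive leading coefficient, hence $\phi_v(t)\to+\infty$ as $|t|\to\infty$. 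This single observation drives the whole argument.

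For coercivity, the previous paragraph already gives that $f_0\to+\infty$ along every ray emanating from $x_0$. The remaining, and only nontrivial, step is to upgrade this ``ray-coercivity'' to genuine level-boundedness ($f_0(x)\to+\infty$ as $\|x\|\to\infty$), which for a general function need not follow but does for convex ones. I would argue by contradiction: if some sublevel set $\{x:f_0(x)\le\alpha\}$ is unbounded, pick $x_k$ in it with $\|x_k-x_0\|\to\infty$, normalise $v_k:=(x_k-x_0)/\|x_k-x_0\|$, and pass to a convergent subsequence $v_k\to v$ with $\|v\|=1$. For each fixed $t>0$ the point $x_0+tv_k$ is a convex combination of $x_0$ and $x_k$ once $\|x_k-x_0\|>t$, so convexity gives $f_0(x_0+tv_k)\le\max\{f_0(x_0),\alpha\}$; letting $k\to\infty$ and using continuity, $\phi_v$ is bounded above on $[0,\infty)$, contradicting $\phi_v(t)\to+\infty$.

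For strict convexity I would also argue by contradiction. If $f_0$ is not strictly convex, there are $x\ne y$ and $t_0\in(0,1)$ with equality in the convexity inequality at $t_0x+(1-t_0)y$; by convexity $f_0$ is then affine on the whole segment $[x,y]$, and because $f_0$ is a polynomial it is affine on the entire line $\{x+sw:s\in\R\}$ with $w:=y-x\ne0$, say $f_0(x+sw)=\alpha+\beta s$. The key step is to propagate this to $x_0$: from the gradient inequality $f_0(x+sw)\ge f_0(u)+\langle\nabla f_0(u),x+sw-u\rangle$ valid for every $u\in\R^n$, letting $s\to\pm\infty$ forces $\langle\nabla f_0(u),w\rangle=\beta$ for \emph{all} $u$. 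Taking the gradient in $u$ of this identity yields $\nabla^2 f_0(u)w=0$ for all $u$, in particular $w^{T}\nabla^2 f_0(x_0)w=0$, contradicting $\nabla^2 f_0(x_0)\succ0$ and $w\ne0$.

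The main obstacle is the strict-convexity half, precisely because the hypothesis only places $\nabla^2 f_0$ positive definite at the single point $x_0$ and, as the example $f_0=x_1^4+x_2^2$ shows, the Hessian genuinely degenerates elsewhere; one cannot hope for $\nabla^2 f_0\succ0$ globally. The device that resolves this is the combination of polynomial rigidity (affine on a segment $\Rightarrow$ affine on the whole line) with the gradient inequality, which transports the degeneracy from the offending line back to $x_0$, where it is forbidden. The coercivity half, by contrast, is routine once the one-variable reduction and the convex ``normalised direction'' argument are in place.
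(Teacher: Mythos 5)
Your proof is correct in every step: the reduction to line restrictions $\phi_v(t)=f_0(x_0+tv)$, which are non-affine convex univariate polynomials (hence of even degree $\geq 2$ with positive leading coefficient, so coercive in $t$); the upgrade from ray-coercivity to boundedness of all sublevel sets via convex combinations and a limiting unit direction, which is exactly the point where convexity is needed; and the strict-convexity half, where the gradient inequality turns "affine on one line" into the global identity $\langle\nabla f_0(u),w\rangle\equiv\beta$, whose differentiation gives $\nabla^2 f_0(u)w=0$ everywhere and in particular contradicts $\nabla^2 f_0(x_0)\succ 0$. One point of comparison is moot, however: the paper contains no proof of this lemma at all — it is quoted as an external result from \cite[Lemma~3.1]{Jeyakumar2014} — so there is no in-paper argument to measure your proposal against. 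Your argument is the natural one and, as far as the cited source goes, follows essentially the same route (restriction to lines, polynomial rigidity forcing affineness on a whole line, and transport of the degeneracy back to the point where the Hessian is definite); it would serve as a correct, self-contained replacement for the citation. A small remark: once you have $\langle\nabla f_0(u),w\rangle\equiv\beta$, you could finish slightly more directly by integrating along the direction $w$ to conclude $f_0(x_0+sw)=f_0(x_0)+\beta s$, contradicting $\phi_w''(0)>0$ without differentiating in $u$; both finishes are equally valid.
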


Now, we give the first representation result for non-negativity of convex polynomials over convex semi-algebraic sets.
Note that the result can be obtained by modifying the proof of \cite[Theorem~3.1]{Jeyakumar2014}; apart from this, we also need the following assumption.
\begin{assumption}\label{assumption1}
Let $z_0\in K.$
There exists a saddle point $(\bar x, \bar \mu,\bar\nu) \in K_{z_0} \times \R^m_+\times \R^p_+$ of the Lagrangian-type function $L_{z_0}$ such that $\nabla^2_{xx}L_{z_0}(\bar x, \bar\mu, \bar\nu)\succ 0.$
\end{assumption}

\begin{theorem}{\rm (cf. \cite[Theorem~3.1]{Jeyakumar2014})}\label{theorem2}
Let $f_j\colon \R^n \to \R,$ $j=1,\ldots,p,$ and $g_i\colon \R^n \to \R,$ $i=1,\ldots,m,$ be convex polynomials.
Consider  problem \eqref{hybridmodel} at $z=z_0\in K,$ where $K$ is defined as \eqref{feasibleset}.
If Assumption~\ref{assumption1} holds$,$ then
\begin{equation*}
\lambda^Tf - \bar f_{z_0} \in \mathcal{Q}(-g,-f_{z_0}).
\end{equation*}
\end{theorem}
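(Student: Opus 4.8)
The plan is to reduce the claimed membership to a single sum-of-squares certificate for the Lagrangian-type function and then exploit coercivity, essentially specializing the argument of \cite[Theorem~3.1]{Jeyakumar2014} to the augmented convex program \eqref{hybridmodel} (objective $\lambda^Tf$, constraints $g_i\le 0$ and $f_j-f_j(z_0)\le 0$).

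First I would extract first-order information from Assumption~\ref{assumption1}. Let $(\bar x,\bar\mu,\bar\nu)\in K_{z_0}\times\R^m_+\times\R^p_+$ be the saddle point. The right-hand saddle inequality $L_{z_0}(\bar x,\bar\mu,\bar\nu)\ge L_{z_0}(\bar x,\mu,\nu)$, taken at $\mu=0,\nu=0$, gives $\sum_{i}\bar\mu_i g_i(\bar x)+\sum_{j}\bar\nu_j\big(f_j(\bar x)-f_j(z_0)\big)\ge 0$; since $\bar x\in K_{z_0}$ makes every summand nonpositive, each vanishes, yielding complementary slackness and $L_{z_0}(\bar x,\bar\mu,\bar\nu)=\lambda^Tf(\bar x)$. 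Moreover, for any $x\in K_{z_0}$ the added terms are nonpositive, so $L_{z_0}(x,\bar\mu,\bar\nu)\le \lambda^Tf(x)$; combined with the left-hand saddle inequality this gives $\lambda^Tf(x)\ge L_{z_0}(x,\bar\mu,\bar\nu)\ge L_{z_0}(\bar x,\bar\mu,\bar\nu)=\lambda^Tf(\bar x)$, so $\bar x$ solves \eqref{hybridmodel} and $\bar f_{z_0}=\lambda^Tf(\bar x)$.

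Next I would package the Lagrangian into one polynomial. Set
\[
h(x):=L_{z_0}(x,\bar\mu,\bar\nu)-\bar f_{z_0}=\lambda^Tf(x)+\sum_{i=1}^m\bar\mu_i g_i(x)+\sum_{j=1}^p\bar\nu_j\big(f_j(x)-f_j(z_0)\big)-\bar f_{z_0}.
\]
As a nonnegative combination of convex polynomials, $h$ is convex; the left-hand saddle inequality shows $h\ge 0$ on $\R^n$ with $h(\bar x)=0$; and $\nabla^2 h(\bar x)=\nabla^2_{xx}L_{z_0}(\bar x,\bar\mu,\bar\nu)\succ 0$. Hence, by Lemma~\ref{lemma2}, $h$ is strictly convex and coercive, with unique global minimizer $\bar x$. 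The key observation is that once $h\in\Sigma[x]$ is established the theorem follows at once: rearranging,
\[
\lambda^Tf-\bar f_{z_0}=h+\sum_{i=1}^m\bar\mu_i(-g_i)+\sum_{j=1}^p\bar\nu_j\big(-(f_j-f_j(z_0))\big),
\]
and since each constant $\bar\mu_i,\bar\nu_j\ge 0$ is a square, the choice $\sigma_0=h$, $\sigma_i=\bar\mu_i$, $\bar\sigma_j=\bar\nu_j$ exhibits membership in $\mathcal{Q}(-g,-f_{z_0})$.

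The main obstacle is therefore the certification $h\in\Sigma[x]$ for the coercive, strictly convex, nonnegative polynomial $h$, and this is exactly where the proof of \cite[Theorem~3.1]{Jeyakumar2014} must be adapted. The difficulty is that $h$ is \emph{not} strictly positive — it vanishes at $\bar x$ — so Putinar's Positivstellensatz (Lemma~\ref{Putinar}) does not apply to $h$ directly, and in general a nonnegative convex polynomial need not be a sum of squares. The route I would follow uses coercivity to compactify: choosing $R>0$ so large that a sublevel set of $h$ lies in $\{x:\|x\|^2\le R\}$, the perturbations $h+\varepsilon$ are strictly positive on this ball and admit Putinar certificates in the Archimedean module generated by $R-\|x\|^2$; the delicate part is to control these certificates as $\varepsilon\downarrow 0$ and to remove the auxiliary ball constraint, using the strict convexity and the positive definiteness of $\nabla^2 h(\bar x)$ to absorb the behaviour at the single nondegenerate zero $\bar x$. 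Carrying this limiting/absorption step through — not the mere positivity input — is the technical heart of the argument, mirroring the coercivity-based step of \cite[Theorem~3.1]{Jeyakumar2014}.
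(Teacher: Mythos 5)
Your first half matches the paper exactly: the saddle-point analysis showing that $\bar x$ solves $({\rm P}_{z_0})$, the definition of $h=L_{z_0}(\cdot,\bar\mu,\bar\nu)-\lambda^Tf(\bar x)$, and the appeal to Lemma~\ref{lemma2} to get strict convexity, coercivity, and uniqueness of the minimizer are the paper's own steps, and your final rearrangement of $\lambda^Tf-\bar f_{z_0}$ is correct as an algebraic identity. The gap is the reduction itself: you reduce the theorem to the claim $h\in\Sigma[x]$, and that claim is false in general, so no amount of care in the $\varepsilon$-limiting argument can close it. A nonnegative, convex, coercive polynomial with a unique zero at which the Hessian is positive definite need not be a sum of squares: by Blekherman's theorem (see the discussion in \cite{Ahmadi2012,Ahmadi2013}) there is a convex form $q$ of degree $4$ that is not SOS; then $q+\epsilon\|x\|^2$ is convex, coercive, nonnegative, vanishes only at the origin where its Hessian equals $2\epsilon I\succ0$, and for some small $\epsilon>0$ it is still not SOS (otherwise, letting $\epsilon\downarrow0$ and using closedness of the SOS cone in $\R[x]_4$, $q$ itself would be SOS). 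Taking $p=1$, $f_1=q+\epsilon\|x\|^2$, and the trivial constraint $g_1\equiv-1$ realizes exactly this polynomial as your $h$ while Assumption~\ref{assumption1} holds, so your intermediate target fails even though the theorem's conclusion is true. This also shows why your proposed rescue cannot work: Putinar certificates for $h+\varepsilon$ on a ball necessarily carry the generator $R-\|x\|^2$ (with degrees that may blow up as $\varepsilon\downarrow0$), so even in the best case you would land in $\mathcal{Q}(R-\|x\|^2)$ rather than in $\Sigma[x]$, and $R-\|x\|^2$ does not belong to $\mathcal{Q}(-g,-f_{z_0})$ in general.

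The paper's proof sidesteps this by never demanding constant SOS multipliers on the constraints (which is what $\sigma_0=h$, $\sigma_i=\bar\mu_i$, $\bar\sigma_j=\bar\nu_j$ amounts to). Instead it introduces the single polynomial $c-h+\lambda^Tf(z_0)-\lambda^Tf(\bar x)$, which (i) equals $c-\sum_{i=1}^m\bar\mu_ig_i-\sum_{j=1}^p(\lambda_j+\bar\nu_j)\big(f_j-f_j(z_0)\big)$ and hence lies in $\mathcal{Q}(-g,-f_{z_0})$, and (ii) has a compact super-level set $S$ by coercivity of $h$, so that $\mathcal{Q}(-g,-f_{z_0})$ is Archimedean. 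Then Scheiderer's distinguished representation theorem \cite[Corollary~3.6]{Scheiderer2005} --- the correct tool for a polynomial that is nonnegative with a single nondegenerate zero on a compact set, precisely the situation where Putinar fails --- yields $h=\sigma_0+\sigma_1\big(c-h+\lambda^Tf(z_0)-\lambda^Tf(\bar x)\big)$ with $\sigma_0,\sigma_1\in\Sigma[x]$. Substituting this into your rearrangement produces the multipliers $\bar\mu_i(1+\sigma_1)$ and $\bar\nu_j+(\lambda_j+\bar\nu_j)\sigma_1$, which are genuinely non-constant SOS polynomials; that non-constancy is exactly what your reduction forbids and what the theorem needs.
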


\begin{proof}
Since $(\bar x, \bar \mu,\bar\nu)$ is a saddle-point of the Lagrangian-type function $L_{z_0},$
it follows that
\begin{equation*}
L_{z_0}(x,\bar \mu,\bar \nu)\geq L_{z_0}(\bar x,\bar \mu,\bar \nu)=\lambda^Tf(\bar x), \  \ \forall x\in\R^n
\end{equation*}
and $\bar x$ is an optimal solution to  problem $({\rm P}_{z_0}).$
Define a function $h\colon\R^n\to\R$ by
\begin{align*}
h(x) :=&\ L_{z_0}(x,\bar \mu,\bar \nu) -\lambda^Tf(\bar x) \\
=&\ \lambda^Tf(x) +\sum_{i=1}^m\bar\mu_ig_i(x)+\sum_{j=1}^p\bar\nu_j\big(f_j(x)-f_j(z_0)\big)-\lambda^Tf(\bar x).
\end{align*}
Then $h$ is a convex polynomial and $h(x) \geq 0$ for all $x \in \R^n.$
Moreover, we easily see that $h(\bar x) = 0 = \inf_{x\in\R^n} h(x).$
Since $\nabla^2_{xx}L_{z_0}(\bar x, \bar\mu, \bar\nu)\succ 0,$ we also see that the Hessian $\nabla^2h(\bar x)$ is positive definite.
It follows from Lemma~\ref{lemma2} that the convex polynomial $h$ is strictly convex and coercive.
Furthermore, this implies that $\bar x$ is the unique optimal solution to $h$ over $\R^n.$
Now consider the set
\begin{equation*}
S :=\{x \in \R^n : c-h(x) +\lambda^Tf(z_0) - \lambda^Tf(\bar x)\geq0\},
\end{equation*}
where $c$ is some positive constant.
Since $z_0\in K_{z_0},$ we see that $\bar x\in S,$ and so, the set $S$ is nonempty and compact (since the polynomial $h$ is coercive).
Moreover, since
\begin{align*}
&\ c-h(x) +\lambda^Tf(z_0) - \lambda^Tf(\bar x)\\
=&\ c-\sum_{j=1}^p\lambda_j\big(f_j(x)-f_j(z_0)\big) -\sum_{i=1}^m\bar\mu_ig_i(x)-\sum_{j=1}^p\bar\nu_j\big(f_j(x)-f_j(z_0)\big)\\
=&\ c-\sum_{i=1}^m\bar\mu_ig_i(x)-\sum_{j=1}^p(\lambda_j+\bar\nu_j)\big(f_j(x)-f_j(z_0)\big)\ \in \ \mathcal{Q}(-g,-f_{z_0})
\end{align*}
and $S = \{x \in \R^n : c-h(x) +\lambda^Tf(z_0) - \lambda^Tf(\bar x)\geq0\}$ is compact, the quadratic module $\mathcal{Q}(-g,-f_{z_0})$ is Archimedean.
It follows from \cite[Corollary~3.6]{Scheiderer2005} (see also \cite[Example 3.18]{Scheiderer2003}) that there exist sum-of-squares polynomials $\sigma_0, \sigma_1 \in \Sigma[x]$ such that, for each $x \in \R^n,$
$h(x) = \sigma_0(x) + \sigma_1(x) \big(c-h(x) + \lambda^Tf(z_0) - \lambda^Tf(\bar x)\big).$
Thus, we have
\begin{equation*}
\lambda^Tf-\bar f_{z_0}=\sigma_0+c\sigma_1-\sum_{i=1}^m\big(\bar\mu_i+\bar\mu_i\sigma_1\big)g_i-\sum_{j=1}^p\big(\bar\nu_j+(\lambda_j+\bar\nu_j)\sigma_1\big)\big(f_j-f_j(z_0)\big),
\end{equation*}
thereby establishing the desired result.
\qed
\end{proof}

\begin{assumption}\label{assumption2}
Let $f_j\colon \R^n \to \R,$ $j=1,\ldots,p,$ and $g_i\colon \R^n \to \R,$ $i=1,\ldots,m,$ be convex polynomials.
For a given point $z_0\in K,$ where $K$ is defined as \eqref{feasibleset}$,$ the following two statements hold$:$
\begin{enumerate}
\item[{\rm (i)}] the Slater-type condition holds for problem $({\rm P}_{z_0}),$ that is$,$ there exists $\hat x \in \R^n$ such that $g_i(\hat x) < 0,$ for $i = 1,\ldots,m,$  and $f_j(\hat x)<f_j(z_0),$ $j=1,\ldots,p;$
\item[{\rm (ii)}] $\sum_{j=1}^p\lambda_j\nabla^2f_j(\bar x) \succ 0,$ where $\bar x \in \argmin_{x\in K_{z_0}} \lambda^Tf(x).$
\end{enumerate}
\end{assumption}

In what follows, slightly modifying \cite[Theorem~3.2]{Jeyakumar2014}, we obtain the second representation for non-negativity of convex polynomials over convex semi-algebraic sets, which is sharper than the result of Theorem~\ref{theorem2}.
\begin{theorem}\label{theorem3}{\rm(cf. \cite[Theorem~3.2]{Jeyakumar2014})}
Let $f_j\colon \R^n \to \R,$ $j=1,\ldots,p,$ and $g_i\colon \R^n \to \R,$ $i=1,\ldots,m,$ be convex polynomials.
Consider  problem \eqref{hybridmodel} at $z=z_0\in K,$ where $K$ is defined as \eqref{example1}.
Suppose that Assumption~\ref{assumption2} holds.
Then
\begin{equation*}
\lambda^Tf - \bar f_{z_0} \in \mathcal{M}(-g,-f_{z_0},-\lambda^Tf_{z_0}).
\end{equation*}
\end{theorem}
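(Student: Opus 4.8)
The plan is to follow the architecture of the proof of Theorem~\ref{theorem2}, but to re-route the single sum-of-squares multiplier onto the shifted objective $\lambda^Tf_{z_0}=\lambda^Tf-\lambda^Tf(z_0)$ and to discharge the two families of constraints through scalar (rather than polynomial) multipliers supplied by convex duality. First I would use the Slater-type condition Assumption~\ref{assumption2}(i): for the convex program $({\rm P}_{z_0})$, whose minimum is attained at $\bar x\in\argmin_{x\in K_{z_0}}\lambda^Tf(x)$, Slater's condition yields strong duality, hence a saddle point of the Lagrangian-type function $L_{z_0}$, i.e. multipliers $\bar\mu\in\R^m_+$ and $\bar\nu\in\R^p_+$ with
\begin{equation*}
L_{z_0}(x,\bar\mu,\bar\nu)\geq L_{z_0}(\bar x,\bar\mu,\bar\nu)=\bar f_{z_0},\qquad\forall x\in\R^n.
\end{equation*}
Setting $h:=L_{z_0}(\cdot,\bar\mu,\bar\nu)-\bar f_{z_0}$, this makes $h$ a convex polynomial with $h\geq0$ on $\R^n$ and $h(\bar x)=0$.

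Next I would extract the consequences of Assumption~\ref{assumption2}(ii). Writing $F:=\lambda^Tf=\sum_{j=1}^p\lambda_jf_j$, the hypothesis $\nabla^2F(\bar x)=\sum_{j=1}^p\lambda_j\nabla^2f_j(\bar x)\succ0$ together with Lemma~\ref{lemma2} shows that $F$ is coercive and strictly convex. Since each $g_i$ and $f_j$ is convex and $\bar\mu_i,\bar\nu_j\geq0$, one has
\begin{equation*}
\nabla^2h(\bar x)=\nabla^2F(\bar x)+\sum_{i=1}^m\bar\mu_i\nabla^2g_i(\bar x)+\sum_{j=1}^p\bar\nu_j\nabla^2f_j(\bar x)\succeq\nabla^2F(\bar x)\succ0,
\end{equation*}
so $h$ is itself strictly convex (Lemma~\ref{lemma2}), whence $\bar x$ is its unique zero.

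The key construction is then to base the auxiliary compact set on the objective rather than on $h$. For a constant $c>0$ I would set $S:=\{x\in\R^n:c-(F(x)-F(z_0))\geq0\}$, which is compact because $F$ is coercive and which contains $\bar x$ in its interior since $F(\bar x)=\bar f_{z_0}\leq F(z_0)<F(z_0)+c$. On $S$ the convex polynomial $h$ is nonnegative with the single interior zero $\bar x$, at which its Hessian is positive definite, so the single-constraint representation \cite[Corollary~3.6]{Scheiderer2005} (see also \cite[Example~3.18]{Scheiderer2003}) furnishes $\sigma_0,\sigma\in\Sigma[x]$ with $h=\sigma_0+\sigma\,(c-(F-F(z_0)))$. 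Rearranging gives $h+\sigma\,(F-F(z_0))=\sigma_0+c\sigma\in\Sigma[x]$, and substituting $h=F-\bar f_{z_0}+\sum_{i=1}^m\bar\mu_ig_i+\sum_{j=1}^p\bar\nu_j(f_j-f_j(z_0))$ together with $F-F(z_0)=\lambda^Tf_{z_0}$ yields
\begin{equation*}
\lambda^Tf-\bar f_{z_0}=(\sigma_0+c\sigma)-\sum_{i=1}^m\bar\mu_ig_i-\sum_{j=1}^p\bar\nu_j\big(f_j-f_j(z_0)\big)-\sigma\,\lambda^Tf_{z_0},
\end{equation*}
which is precisely a certificate of membership in $\mathcal{M}(-g,-f_{z_0},-\lambda^Tf_{z_0})$.

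The step I expect to be the main obstacle is exactly this re-routing. In Theorem~\ref{theorem2} the auxiliary compact set is a sublevel set of $h$, which forces the sum-of-squares multiplier to multiply a combination of all the constraints and so lands only in the larger module $\mathcal{Q}$. To reach the sharper module $\mathcal{M}$ one must instead build $S$ from the objective sublevel set---legitimate only once Assumption~\ref{assumption2}(ii) has guaranteed coercivity of $F$---and then verify that $\bar x$ is a genuinely interior, nondegenerate zero of $h$ so that Scheiderer's single-constraint certificate applies; the scalar multipliers $\bar\mu,\bar\nu$ are inherited for free from the saddle point. A secondary care point is establishing that saddle point from Assumption~\ref{assumption2}(i), i.e. invoking strong duality for the convex program $({\rm P}_{z_0})$ under the Slater-type condition.
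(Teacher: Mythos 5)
Your proposal is correct and follows essentially the same route as the paper's proof: Slater's condition yields the KKT multipliers $\bar\mu,\bar\nu$, the auxiliary polynomial $h=L_{z_0}(\cdot,\bar\mu,\bar\nu)-\bar f_{z_0}$ is nonnegative with unique nondegenerate zero $\bar x$, the compact set is the sublevel set $\{x:c+\lambda^Tf(z_0)-\lambda^Tf(x)\geq 0\}$ of the coercive objective, and Scheiderer's single-constraint certificate gives $h=\sigma_0+\sigma\bigl(c-\lambda^Tf_{z_0}\bigr)$, which rearranges to the desired membership in $\mathcal{M}(-g,-f_{z_0},-\lambda^Tf_{z_0})$. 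The only cosmetic difference is that you phrase the multiplier existence via strong duality and a saddle point, while the paper invokes the KKT conditions directly; these are equivalent here.
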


\begin{proof}
Let $\bar x\in \argmin_{x\in K_{z_0}} \lambda^Tf(x).$
Since the Slater-type condition holds for problem $({\rm P}_{z_0}),$ by the KKT optimality conditions for convex optimization problems, there exist the Lagrangian multipliers $\bar\mu\in\R^m_+$ and $\bar\nu\in\R^p_+$ such that
\begin{align*}
0 =&\ \sum_{j=1}^p\lambda_j\nabla f_j(\bar x)+\sum_{i=1}^m\bar\mu_i\nabla g_i(\bar x)+\sum_{j=1}^p\bar\nu_j\nabla f_j(\bar x),\\
0 =&\ \bar\mu_i g_i(\bar x), \ i=1,\ldots,m,\\
0 =&\ \bar\nu_j \big(f_j(\bar x)-f_j(z_0)\big), \ j=1,\ldots,p.
\end{align*}
By defining a convex polynomial $h\colon \R^n\to\R$ as
\begin{eqnarray*}
h(x):=\lambda^Tf(x)+\sum_{i=1}^m\bar\mu_ig_i(x)+\sum_{j=1}^p\bar\nu_j\big(f_j(x)-f_j(z_0)\big)-\bar f_{z_0}.
\end{eqnarray*}
It is easily verified that $h(x)\geq0$ for all $x\in \R^n,$ particularly, $h(\bar x)=0.$
On the other hands, since $\sum_{j=1}^p\lambda_j\nabla^2f_j(\bar x) \succ 0,$ it follows from Lemma~\ref{lemma2} that $\lambda^Tf$ is strictly convex and coercive on $\R^n.$ Consequently, the set $F:=\{x\in \R^n:c+\lambda^Tf(z_0)-\lambda^Tf(x)\geq 0\}$ is nonempty and compact, where $c$ is some positive constant.
Moreover, the quadratic module $\mathcal{Q}(c-\lambda^Tf_{z_0})$ is Archimedean along with the fact that $c-\lambda^Tf_{z_0}\in\mathcal{Q}(c-\lambda^Tf_{z_0})$ and $F$ is compact.
In addition, as $h(x)\geq0$ for all $x\in F,$ $h(\bar x)=0$ and $\nabla^2h(\bar x)\succ0,$ $\bar x$ is a unique optimal solution to the problem $\min_{x\in F}h(x).$
Thanks to \cite[Corollary~3.6]{Scheiderer2005} (see also \cite[Example 3.18]{Scheiderer2003}), there exist $\sigma, \sigma_0\in\Sigma[x]$ such that $h=\sigma_0+\sigma(c-\lambda^Tf_{z_0}),$ and hence,
\begin{equation*}
\lambda^Tf-\bar f_{z_0}=\sigma_0+c\sigma-\sum_{i=1}^m\bar\mu_ig_i-\sum_{j=1}^p\bar\nu_j\left(f_j-f_j(z_0)\right)-\sigma(\lambda^Tf_{z_0}),
\end{equation*}
which is the desired result.
\qed
\end{proof}

\begin{remark}
{\rm Note that, in Theorem~\ref{theorem3}, the condition $\sum_{j=1}^p\lambda_j\nabla^2f_j(\bar x) \succ 0$ guarantees the compactness of the feasible set $K_{z_0}$ for the problem $({\rm P}_{z_0}).$
Indeed, let $\{x_k\}\subset K_{z_0}$ be an arbitrary sequence.
Then, for each $k\in \N,$ $x_k\in K_\lambda:=\{x\in K:\lambda^Tf(x)\leq\lambda^Tf(z_0)\},$ which is compact as $\lambda^Tf$ is coercive on $K.$
So, there exists a subsequence $\{x_{k_l}\}\subset K_{\lambda}$ such that $x_{k_l}\to x^\ast\in K_\lambda$ as $l\to + \infty$.
Since $x_{k_l}\in K_{z_0}$ for all $l\in \N,$ by the continuity of each $f_j,$ we have $x^\ast\in K_{z_0},$ and so, the set $K_{z_0}$ is nonempty and compact.
This yields that the efficient solution set of \eqref{modelsetting} is nonempty.
Note also that the problem \eqref{modelsetting} admits an efficient solution if there exists $x\in \R^n$ such that $\sum_{j=1}^p\lambda_j\nabla^2f_j(x) \succ 0.$
}\end{remark}

\subsection{Finite convergence for the Lasserre-type hierarchies of semidefinite programming relaxations}
{\leqnomode{
Let $z\in K,$ where $K$ is defined as \eqref{feasibleset}, be given.
With $g_0=1,$ let $r_i:=\lceil\deg g_i/2\rceil,$ $i=0,1,\ldots,m,$ and let $d_j:=\lceil\deg f_j/2\rceil,$ $j=1,\ldots,p,$
where the notation $\lceil a\rceil$ stands for the smallest integer greater than or equal to $a.$
Now, for $k\ge k_0:=\max\{\max_ir_i,\max_jd_j\},$ consider the following semidefinite programming problem:
\begin{align}\label{Qkmodel}
&\rho_z^k:= \inf\limits_{y} \ \ \ \sum_{j=1}^p\sum_{\alpha\in\N^n_{2k}}\lambda_j(f_j)_\alpha y_\alpha  \tag{${\rm Q}^k_z$} \\
& \qquad\ \ {\rm s.t. } \ \ \  M_{k-r_i}(-g_iy)\succeq0, \ i=0,1,\ldots,m, \nonumber \\
& \qquad \qquad \quad M_{k-d_j}\Big(\big(f_j(z)-f_j\big)y\Big)\succeq0, \ j=1,\ldots,p. \nonumber
\end{align}
It is worth noting that  \eqref{Qkmodel} is a Lasserre-type hierarchy of SDP relaxation of problem \eqref{hybridmodel} i.e., $\rho_z^k\leq \rho_z^{k+1}\leq\cdots\leq \bar f_z$ for all  $k\geq k_0$ (see, e.g., \cite{Lasserre2015}).

Now, consider the following programming problem:
\begin{align}\label{DualQkmodel}
&\bar\rho_z^k:= \sup\limits_{\gamma,\sigma,\sigma_i,\bar\sigma_j} \ \ \   \gamma  \tag{$\widehat{\rm Q}^k_z$} \\
& \qquad\quad \ \; {\rm s.t. } \quad \ \ \lambda^Tf-\gamma=\sigma_0-\sum_{i=1}^m\sigma_ig_i-\sum_{j=1}^p\bar\sigma_j\big(f_j-f_j(z)\big), \nonumber \\
& \qquad \qquad \qquad \ \  \sigma_i\in \Sigma[x]_{k-r_i}, \ i=0,1,\ldots, m, \nonumber \\
& \qquad \qquad \qquad \ \  \bar\sigma_j\in \Sigma[x]_{k-d_j}, \ j=1,\ldots,p. \nonumber
\end{align}
It is worth mentioning that  \eqref{DualQkmodel} is the dual problem of  \eqref{Qkmodel} (see, \cite{Lasserre2009,Lasserre2015}).
Note that, for a given $z\in K,$ the set $K_z$ is nonempty.
This implies that the feasible set of  \eqref{Qkmodel} is nonempty.
So, if the feasible set of  \eqref{DualQkmodel} is nonempty, then we see that $\rho_z^k\geq \bar\rho_z^k$ for all $k\geq k_0$ by weak duality.
Moreover, \eqref{DualQkmodel} has an asymptotic convergence in the sense that $\bar\rho_z^k\uparrow\bar f_z$ as $k\to\infty$ without any regularity conditions (see, e.g., \cite[Theorem~2.1]{Jeyakumar2014}).
}}

\medskip
Now, with the help of Theorem~\ref{theorem2}, we show the finite convergence for the hierarchy of SDP relaxations of \eqref{hybridmodel} in the next Theorem.
\begin{theorem}\label{theorem4}
Let $f_j\colon \R^n \to \R,$ $j=1,\ldots,p,$ and $g_i\colon \R^n \to \R,$ $i=1,\ldots,m,$ be convex polynomials.
Consider problem \eqref{hybridmodel} at $z=z_0\in K,$ where $K$ is defined as \eqref{feasibleset}.
If Assumption~\ref{assumption1} is satisfied$,$ then there exists an integer $\bar k$ such that $\bar \rho_{z_0}^k=\rho_{z_0}^k=\bar f_{z_0}$ for all $k\geq \bar k,$ and both  $(\widehat{\rm Q}^k_{z_0})$ and  $({\rm Q}^k_{z_0})$ attain their optimal solutions.
\end{theorem}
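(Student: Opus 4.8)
The plan is to turn the qualitative membership furnished by Theorem~\ref{theorem2} into a concrete feasible point of the dual semidefinite program $(\widehat{\rm Q}^k_{z_0})$, and then to pin down all three quantities $\bar\rho_{z_0}^k$, $\rho_{z_0}^k$ and $\bar f_{z_0}$ by squeezing them between the monotonicity/weak-duality inequalities already recorded just before the theorem. Attainment on the dual side will be immediate from the exhibited certificate, while attainment on the primal side will come from a rank-one moment vector built from an optimal solution of $({\rm P}_{z_0})$.

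First I would invoke Theorem~\ref{theorem2}: under Assumption~\ref{assumption1} one has $\lambda^Tf-\bar f_{z_0}\in\mathcal{Q}(-g,-f_{z_0})$. Reading off the explicit representation displayed at the end of that proof, there are fixed sum-of-squares polynomials $\sigma_0,\sigma_1,\ldots,\sigma_m,\bar\sigma_1,\ldots,\bar\sigma_p\in\Sigma[x]$ (the multipliers of $g_i$ and of $f_j-f_j(z_0)$ are sum-of-squares precisely because $\lambda\in{\rm int}\,\R^p_+$ and $\bar\mu,\bar\nu\geq 0$) with
\begin{equation*}
\lambda^Tf-\bar f_{z_0}=\sigma_0-\sum_{i=1}^m\sigma_ig_i-\sum_{j=1}^p\bar\sigma_j\big(f_j-f_j(z_0)\big).
\end{equation*}
This is exactly the equality constraint of $(\widehat{\rm Q}^k_{z_0})$ with $\gamma=\bar f_{z_0}$. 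I would then choose $\bar k:=\max\{k_0,\ \lceil\tfrac12\deg\sigma_0\rceil,\ \max_i(r_i+\lceil\tfrac12\deg\sigma_i\rceil),\ \max_j(d_j+\lceil\tfrac12\deg\bar\sigma_j\rceil)\}$, so that for every $k\geq\bar k$ the membership requirements $\sigma_i\in\Sigma[x]_{k-r_i}$ and $\bar\sigma_j\in\Sigma[x]_{k-d_j}$ hold. Hence $(\bar f_{z_0},\sigma_0,\{\sigma_i\},\{\bar\sigma_j\})$ is feasible for $(\widehat{\rm Q}^k_{z_0})$, which yields $\bar\rho_{z_0}^k\geq\bar f_{z_0}$ and shows that this feasible point already attains the supremum. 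Combining this with the inequalities $\bar\rho_{z_0}^k\leq\rho_{z_0}^k\leq\bar f_{z_0}$ recorded after the definition of $(\widehat{\rm Q}^k_{z_0})$ (weak duality and the relaxation bound) forces $\bar\rho_{z_0}^k=\rho_{z_0}^k=\bar f_{z_0}$ for all $k\geq\bar k$.

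It remains to show that the primal relaxation $({\rm Q}^k_{z_0})$ attains its optimum. As observed in the proof of Theorem~\ref{theorem2}, the saddle point of Assumption~\ref{assumption1} provides an optimal solution $\bar x\in K_{z_0}$ of $({\rm P}_{z_0})$. I would take the rank-one moment vector $y=v_{2k}(\bar x)$, i.e.\ $y_\alpha=\bar x^\alpha$ for $\alpha\in\N^n_{2k}$. A direct computation collapses each localization matrix to $M_{k-r_i}(-g_iy)=-g_i(\bar x)\,v_{k-r_i}(\bar x)v_{k-r_i}(\bar x)^T$ and $M_{k-d_j}\big((f_j(z_0)-f_j)y\big)=\big(f_j(z_0)-f_j(\bar x)\big)\,v_{k-d_j}(\bar x)v_{k-d_j}(\bar x)^T$, both positive semidefinite since $\bar x\in K_{z_0}$ gives $g_i(\bar x)\leq 0$ and $f_j(\bar x)\leq f_j(z_0)$; moreover the objective equals $\lambda^Tf(\bar x)=\bar f_{z_0}=\rho_{z_0}^k$. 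Thus $y$ is feasible and optimal for $({\rm Q}^k_{z_0})$.

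All the verifications are routine once the certificate of Theorem~\ref{theorem2} is in hand; the only point that needs care is the degree bookkeeping that fixes $\bar k$, namely confirming that the multipliers in that certificate are genuinely sum-of-squares of fixed finite degree so that they fit the degree-truncated cones $\Sigma[x]_{k-r_i}$ and $\Sigma[x]_{k-d_j}$ for all large $k$. This is the main (and essentially the only) obstacle, and it is a mild one.
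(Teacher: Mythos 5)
Your proposal is correct and takes essentially the same route as the paper's proof: invoke Theorem~\ref{theorem2} to get the SOS certificate, choose $\bar k$ by degree bookkeeping so that $\big(\bar f_{z_0},\sigma_0,(\sigma_i),(\bar\sigma_j)\big)$ is feasible for $(\widehat{\rm Q}^k_{z_0})$, squeeze $\bar f_{z_0}\leq\bar\rho_{z_0}^k\leq\rho_{z_0}^k\leq\bar f_{z_0}$ via weak duality and the relaxation bound, and get primal attainment from the rank-one moment vector $v_{2k}(\bar x)$ of the optimal solution $\bar x$ supplied by the saddle point. The only differences are cosmetic: your justification that the multipliers are genuinely SOS and your explicit collapse of the localization matrices make precise what the paper leaves as ``it is clear.''
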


\begin{proof}
Thanks to Theorem~\ref{theorem2}, there exist sum-of-squares polynomials $\sigma_i,$ $i=0,1,\ldots,m,$ and $\bar\sigma_j,$ $j=1,\ldots,p,$  such that
\begin{equation*}
\lambda^Tf - \bar f_{z_0} =\sigma_0-\sum_{i=1}^m\sigma_ig_i-\sum_{j=1}^p\bar\sigma_j\big(f_j-f_j(z_0)\big).
\end{equation*}
Now, let $\bar k\geq\max\left\{\deg\sigma_0,\max_i\{\deg(\sigma_ig_i)\},\max_j\{\deg(\bar\sigma_jf_j)\}\right\}.$
Then, we have $\big(\bar f_{z_0},\sigma_0,(\sigma_i),(\bar\sigma_j)\big)$ is a feasible solution of  $(\widehat{\rm Q}^k_{z_0})$ for $k=\bar k,$
and so, $\bar f_{z_0}\leq \bar \rho^{k}_{z_0}$ for $k\geq \bar k.$
Also, we can easily see that the sequence $\{\bar{\rho}_{z_0}^{k}\}$ is monotonically increasing and bounded from above by $\bar{f}_{z_0},$ in particular, $\bar{\rho}_{z_0}^{k}\leq\bar{f}_{z_0}$ for all $k\geq\bar k.$
Thus, $\bar{\rho}_{z_0}^{k}=\bar{f}_{z_0}$ for all $k\geq \bar k.$ In fact, $\big(\bar f_{z_0},\sigma_0,(\sigma_i),(\bar\sigma_j)\big)$ is an optimal solution of  $(\widehat{\rm Q}^k_{z_0})$ for all $k\geq \bar k.$

On the other hand, by the weak duality between \eqref{Qkmodel} and \eqref{DualQkmodel}, we have $\bar \rho_{z_0}^k\leq\rho_{z_0}^k$ for all $k\geq \bar k.$
Thus, we conclude that $\bar \rho_{z_0}^k= \rho_{z_0}^k=\bar f_{z_0}$ for all $k\geq \bar k.$
In particular, it is clear that, for all $k\geq \bar k,$ $\bar y=v_{2k}(\bar x)$ is an optimal solution to  $({\rm Q}^k_{z_0}),$ which completes the proof.
\qed
\end{proof}

\subsection{Finite convergence for the Lasserre-type hierarchy of sharp semidefinite programming relaxations}
{\leqnomode{
Let $z\in K,$ {where $K$ is defined as \eqref{feasibleset},} be given.
Consider the following semidefine programming problem:
\begin{align}\label{Pkmodel}
&f_z^k:= \inf\limits_{y} \ \ \ \sum_{j=1}^p\sum_{\alpha\in\N^n_{2k}}\lambda_j(f_j)_\alpha y_\alpha  \tag{${\rm P}^k_z$} \\
& \qquad\ \ {\rm s.t. } \ \ \  M_k(y)\succeq0, \nonumber \\
& \qquad \qquad \quad \sum_{\alpha\in\N^n_{2k}}(g_i)_\alpha y_\alpha\leq0, \  i=1,\ldots,m, \nonumber \\
& \qquad \qquad \quad \sum_{\alpha\in\N^n_{2k}}(f_j)_\alpha y_\alpha\leq f_j(z), \ j=1,\ldots,p, \nonumber \\
& \qquad \qquad \quad M_{k-d_f}\big((-\lambda^Tf_{z})y\big)\succeq0, \nonumber
\end{align}
where $k\ge k_0=\max\{\max_jd_j,\max_ir_i\}$ and $d_f:=\max_jd_j.$
It is worth noting that  \eqref{Pkmodel} is also Lasserre-type hierarchy of SDP relaxation for \eqref{hybridmodel}.
Indeed, letting $x\in K_z$ and  $y:=v_{2k}(x),$ we see that $y$ is a feasible solution of \eqref{Pkmodel} with its value $f(x).$
So, we have $f_z^k\le \bar f_z$ for all $k\ge k_0.$
In addition, we see that $f_z^k\le f_z^{k+1}$ for all $k\ge k_0$ since  $({\rm P}^{k+1}_z)$ is more constrained than  $({\rm P}^{k}_z).$
}}

Below, we consider the dual problem \eqref{DualPkmodel} of  \eqref{Pkmodel} as follows:
 \begin{align}\label{DualPkmodel}
&\bar f_z^k:= \sup\limits_{\gamma,\sigma,\sigma_0,\mu,\nu} \ \ \   \gamma  \tag{$\widehat{\rm P}^k_z$} \\
& \qquad\quad \ \; {\rm s.t. } \quad \ \ \lambda^Tf-\gamma=\sigma_0-\sum_{i=1}^m\mu_ig_i-\sum_{j=1}^p\nu_j\big(f_j-f_j(z)\big) - \sigma (\lambda^Tf_{z} ), \nonumber \\
& \qquad \qquad \qquad \ \  \sigma\in\Sigma[x]_{k}, \ \sigma_0\in\Sigma[x]_{k-d_f}, \nonumber \\
& \qquad \qquad \qquad \ \  \mu_i\geq0,\ i =1,\ldots,m, \ \nu_j\geq0, \ j=1,\ldots,p. \nonumber
\end{align}
Note that weak duality holds between \eqref{Pkmodel} and \eqref{DualPkmodel}, i.e., $\bar f_z^k\leq f_z^k$ for all $k\geq k_0.$
Moreover, it is easily to verify that $\bar f_z^k\leq \bar f_z^{k+1}$ for all $k\geq k_0.$

We now establish an asymptotic convergence result for the SDP relaxations \eqref{DualPkmodel} under the positive definiteness of the Hessian of the objective function of problem \eqref{hybridmodel} at some point, and the proof of this result can be obtained by slightly modifying the proof of \cite[Theorem~2.1]{Jeyakumar2014}.
\begin{theorem}
Let $f_j\colon \R^n \to \R,$ $j=1,\ldots,p,$ and $g_i\colon \R^n \to \R,$ $i=1,\ldots,m,$ be convex polynomials.
Let $z_0\in K,$ where $K$ is defined as \eqref{feasibleset}$,$ be given.
If there exists $\tilde x\in\R^n$ such that $\nabla^2(\lambda^Tf)(\tilde x)\succ0,$ then $\bar f_{z_0}^k\uparrow\bar f_{z_0}$ as $k\to\infty.$
\end{theorem}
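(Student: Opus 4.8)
The plan is to reduce the asymptotic statement to a single membership assertion and then feed it into the dual SDP \eqref{DualPkmodel}. First observe that, by the weak duality between \eqref{Pkmodel} and \eqref{DualPkmodel} together with the monotonicity $\bar f_{z_0}^k\le\bar f_{z_0}^{k+1}$ noted before the theorem, the sequence $\{\bar f_{z_0}^k\}$ is nondecreasing and bounded above by $\bar f_{z_0}$, so it converges to some $\ell\le\bar f_{z_0}$; the theorem therefore amounts to proving $\ell\ge\bar f_{z_0}$. Concretely, it suffices to show that for every $\epsilon>0$ one has
\begin{equation*}
\lambda^Tf-\bar f_{z_0}+\epsilon\in\mathcal{M}(-g,-f_{z_0},-\lambda^Tf_{z_0}),
\end{equation*}
since then, fixing such a certificate of bounded degree and taking $k$ large enough that $\sigma\in\Sigma[x]_{k}$ and $\sigma_0\in\Sigma[x]_{k-d_f}$, the tuple $(\bar f_{z_0}-\epsilon,\sigma,\sigma_0,\mu,\nu)$ becomes feasible for $(\widehat{\rm P}^k_{z_0})$, whence $\bar f_{z_0}^k\ge\bar f_{z_0}-\epsilon$; letting $\epsilon\downarrow0$ gives $\ell\ge\bar f_{z_0}$.

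Next I would set up the compactness and Archimedean structure that make this module usable. Since $\nabla^2(\lambda^Tf)(\tilde x)\succ0$ at some $\tilde x$, Lemma~\ref{lemma2} shows that $\lambda^Tf$ is coercive and strictly convex on $\R^n$. Because $\lambda\in{\rm int\,}\R^p_+$ and $f_j(x)\le f_j(z_0)$ on $K_{z_0}$, every $x\in K_{z_0}$ satisfies $\lambda^Tf(x)\le\lambda^Tf(z_0)$, so $K_{z_0}\subseteq L:=\{x\in\R^n:\lambda^Tf(x)\le\lambda^Tf(z_0)\}$, and $L$ is compact by coercivity. Hence $\bar f_{z_0}$ is attained at a (unique) minimizer $\bar x\in K_{z_0}$. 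Moreover $-\lambda^Tf_{z_0}=\lambda^Tf(z_0)-\lambda^Tf$ lies in $\mathcal{M}(-g,-f_{z_0},-\lambda^Tf_{z_0})$ (take $\sigma=1$ and all remaining terms zero) and its nonnegativity set is exactly the compact set $L$, so this module — and already the smaller module generated by the single term $-\lambda^Tf_{z_0}$ — is Archimedean.

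The heart of the argument is producing the membership above \emph{with scalar multipliers} on the convex constraints $g_i$ and $f_j-f_j(z_0)$. Under a Slater-type condition for $({\rm P}_{z_0})$ as in Assumption~\ref{assumption2}(i), the KKT conditions for the convex program furnish $\bar\mu\in\R^m_+$ and $\bar\nu\in\R^p_+$ for which the convex polynomial
\begin{equation*}
h:=\lambda^Tf+\sum_{i=1}^m\bar\mu_ig_i+\sum_{j=1}^p\bar\nu_j\big(f_j-f_j(z_0)\big)-\bar f_{z_0}
\end{equation*}
satisfies $h\ge0$ on $\R^n$ and $h(\bar x)=0$, exactly as in the proof of Theorem~\ref{theorem3}. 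Then $h+\epsilon>0$ on all of $\R^n$, in particular on the compact set $L$, so Putinar's Positivstellensatz (Lemma~\ref{Putinar}) applied to the Archimedean module generated by $-\lambda^Tf_{z_0}$ yields $\sigma_0,\sigma\in\Sigma[x]$ with $h+\epsilon=\sigma_0-\sigma\,\lambda^Tf_{z_0}$. Substituting the definition of $h$ and rearranging gives
\begin{equation*}
\lambda^Tf-\bar f_{z_0}+\epsilon=\sigma_0-\sum_{i=1}^m\bar\mu_ig_i-\sum_{j=1}^p\bar\nu_j\big(f_j-f_j(z_0)\big)-\sigma\,\lambda^Tf_{z_0},
\end{equation*}
which is precisely the required membership in $\mathcal{M}(-g,-f_{z_0},-\lambda^Tf_{z_0})$ and closes the scheme of the first paragraph.

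I expect the main obstacle to be discharging this representation without presupposing a constraint qualification, which is all the bare hypothesis of the theorem provides. The clean KKT route above relies on strong duality for $({\rm P}_{z_0})$, and when Slater fails one obtains Lagrange multipliers only in a limiting (asymptotic KKT) sense. The remedy, following \cite{Jeyakumar2014}, is to relax the constraints slightly — for instance replace $g_i\le0,\ f_j\le f_j(z_0)$ by $g_i\le\delta,\ f_j\le f_j(z_0)+\delta$, for which $z_0$ is strictly feasible — derive the representation for the perturbed program, and let $\delta\downarrow0$ while checking that the multipliers stay bounded (guaranteed here by compactness of $L$ and coercivity of $\lambda^Tf$) and that the residual $\delta$-terms are absorbed into the constant $\gamma$; the coercivity term $-\lambda^Tf_{z_0}$ keeps the module Archimedean uniformly in $\delta$. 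Verifying this limiting passage, and confirming that the perturbation does not destroy the $\mathcal{M}$-structure, is the delicate point; everything else is monotonicity of $\{\bar f_{z_0}^k\}$ and degree bookkeeping.
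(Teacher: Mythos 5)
Your skeleton is the paper's: reduce the theorem to showing $\lambda^Tf-\bar f_{z_0}+\epsilon\in\mathcal{M}(-g,-f_{z_0},-\lambda^Tf_{z_0})$ for every $\epsilon>0$, get the Archimedean property from coercivity of $\lambda^Tf$ (Lemma~\ref{lemma2}) via the compact set $S=\{x\in\R^n:\lambda^Tf(x)\le\lambda^Tf(z_0)\}$, apply Putinar's Positivstellensatz (Lemma~\ref{Putinar}), and finish by weak duality between \eqref{Pkmodel} and \eqref{DualPkmodel}. The genuine gap is exactly the step you flag yourself: producing the scalar multipliers $\mu,\nu$ without a constraint qualification. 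Your paragraph-3 route needs Slater (Assumption~\ref{assumption2}(i)), which the theorem does not assume, and your paragraph-4 remedy fails as stated, for two reasons. First, the claim that the multipliers of the $\delta$-relaxed program stay bounded as $\delta\downarrow0$ is false precisely in the Slater-degenerate situation you are trying to handle: for $\min\{x^2:(x-1)^2\le 0\}$ (coercive, strictly convex objective), the relaxed program $\min\{x^2:(x-1)^2\le\delta\}$ has unique multiplier $(1-\sqrt{\delta})/\sqrt{\delta}\to\infty$; compactness of $L$ and coercivity do not prevent this. Second, you never address value stability: the relaxed value $\inf\{\lambda^Tf(x):x\in K_{z_0,\delta}\}$, where $K_{z_0,\delta}:=\{x:g_i(x)\le\delta,\ f_j(x)-f_j(z_0)\le\delta\}$, is a priori only $\le\bar f_{z_0}$, and the certificate extracted from the perturbed program then bounds the wrong quantity; letting $\delta\downarrow0$ does not repair this by itself.

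The paper's proof (following Jeyakumar et al.) avoids both problems by never sending $\delta\to0$: it is a fixed-$\delta$-per-$\epsilon$ argument. Fix $\epsilon>0$. (i) \emph{Value stability:} there exists $\delta>0$ such that $\lambda^Tf-\bar f_{z_0}+\epsilon>0$ on $K_{z_0,\delta}$; the paper proves this by contradiction, invoking the Frank--Wolfe type existence result (Lemma~\ref{lemma1}) for an auxiliary convex polynomial program (under your coercivity hypothesis one could alternatively argue by compactness, since $K_{z_0,\delta}$ lies in a sublevel set of $\lambda^Tf$). (ii) \emph{Approximate multipliers:} the $\delta$-relaxed program satisfies Slater at $z_0$ itself, because $g_i(z_0)\le0<\delta$ and $f_j(z_0)-f_j(z_0)=0<\delta$; hence Lagrangian strong duality with dual attainment gives $\mu\in\R^m_+$, $\nu\in\R^p_+$ such that $\lambda^Tf(x)+\sum_{i=1}^m\mu_i\big(g_i(x)-\delta\big)+\sum_{j=1}^p\nu_j\big(f_j(x)-f_j(z_0)-\delta\big)\ge\bar f_{z_0}-\epsilon$ for all $x\in\R^n$. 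Since $\delta\big(\sum_{i=1}^m\mu_i+\sum_{j=1}^p\nu_j\big)\ge0$, these slack terms can simply be dropped, so $h:=\lambda^Tf-\bar f_{z_0}+\sum_{i=1}^m\mu_ig_i+\sum_{j=1}^p\nu_j\big(f_j-f_j(z_0)\big)+2\epsilon>0$ on all of $\R^n$ --- no bound on the multipliers is ever needed, and the gap between $\bar f_{z_0}$ and the relaxed value is absorbed by $\epsilon$. From this point your Putinar step and the degree bookkeeping go through verbatim (with $\epsilon$ replaced by $2\epsilon$, which is harmless since $\epsilon$ is arbitrary).
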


\begin{proof}
Let $\epsilon > 0.$
We first claim that there exist  $\mu \in \R^m_+$ and $\nu\in\R^p_+$ such that
\begin{equation}\label{theorem5rel1}
\lambda^Tf(x) - \bar f_{z_0} +\sum_{i=1}^m\mu_ig_i(x)+\sum_{j=1}^p\nu_j\big(f_j(x)-f_j(z_0)\big)+\epsilon > 0, \  \ \forall x \in \R^n.
\end{equation}
Since  $\lambda^Tf(x) - \bar f_{z_0}\geq0$ for all $x\in K_{z_0},$  observe that $\lambda^Tf - \bar f_{z_0} + \epsilon  > 0$ on $K_{z_0}.$
Then there exists $\delta > 0$ such that $\lambda^Tf(x) - \bar f_{z_0} + \epsilon > 0$ for all $x\in K_{z_0,\delta},$ where
\begin{equation*}
K_{z_0,\delta}:=\{x\in\R^n:g_i(x)\leq\delta, \ i=1,\ldots,m, \ f_j(x)-f_j(z_0)\leq\delta, \ j=1,\ldots,p\}.
\end{equation*}
[Otherwise,  suppose that there exist sequences $\{\delta_k\} \subset \R_+,$ $\delta_k \to 0$ and $\{x_k\} \subset \R^n$ such that
$g_i(x_k) \leq \delta_k,$ $i = 1,\ldots,m,$ $f_j(x_k)-f_j(z_0)\leq\delta_k,$ $j=1,\ldots,p,$ and $\lambda^Tf(x_k)-\bar f_{z_0} + \epsilon \leq 0.$
Then,
\begin{eqnarray*}
0 &\leq& \inf_{x,w}\bigg\{\sum_{i=1}^{m+p}w_i^2 : \lambda^Tf(x)-\bar f_{z_0} + \epsilon \leq 0, \ g_i(x)  \leq  w_i, \ i = 1,\ldots ,m,  \\
  &&\hspace{23mm}f_j(x)-f_j(z_0)\leq w_{m+j}, \ j=1,\ldots,p\bigg\}\\
&\leq&\sum_{i=1}^{m+p}\delta_k^2 = (m+p)\delta_k^2 \to 0 \textrm{ as } k \to \infty.
\end{eqnarray*}
It follows from Lemma~\ref{lemma1} that there exist $x^\ast\in\R^n$ and $w^\ast\in\R^{m+p}$ such that
$\lambda^Tf(x^\ast)-\bar f_{z_0} + \epsilon \leq 0,$ $g_i(x^\ast)  \leq  w^\ast_i,$ $i = 1,\ldots ,m,$ $f_j(x^\ast)-f_j(z_0)\leq w^\ast_{j+m},$ $j=1,\ldots,p,$ and $\sum_{i=1}^{m+p}(w^\ast_i)^2=0,$
i.e., $\lambda^Tf(x^\ast)-\bar f_{z_0} + \epsilon \leq 0,$ $g_i(x^\ast)  \leq  0,$ $i = 1,\ldots ,m,$ $f_j(x^\ast)-f_j(z_0)\leq 0,$ $j=1,\ldots,p,$
which contradicts the fact that $\lambda^Tf - \bar f_{z_0} + \epsilon$ is positive over $K_{z_0}.$]

Now, define $h\colon\R^n\to\R$ by
\begin{equation*}
h(x):=\lambda^Tf(x) - \bar f_{z_0} +\sum_{i=1}^m\mu_ig_i(x)+\sum_{j=1}^p\nu_j\big(f_j(x)-f_j(z_0)\big)+\epsilon, \  \forall x \in \R^n.
\end{equation*}
Along with \eqref{theorem5rel1}, it is clear that $h(x)>0$ for all $x\in\R^n.$

On the other hand, since $\sum_{j=1}^p\lambda_j\nabla^2f_j(\tilde x)\succ0$ for some $\tilde x\in\R^n,$ it follows from Lemma~\ref{lemma2} that $\lambda^Tf$ is strictly convex and coercive on $\R^n.$
Let $S:=\{x\in\R^n:-\lambda^Tf_{z_0}(x)\geq0\}.$
Then the set $S$ is nonempty and compact.
Note that $h$ is positive on $S.$
Moreover, since $-\lambda^Tf_{z_0} \in \mathcal{Q}(-\lambda^Tf_{z_0})$ and $S$ is compact, the quadratic module $\mathcal{Q}(-\lambda^Tf_{z_0})$ is Archimedean.
Thanks to Lemma~\ref{Putinar} (Putinar's Positivstellensatz), there exist $\sigma,\sigma_0\in\Sigma[x]$ such that $h=\sigma_0-\sigma(\lambda^Tf_{z_0}),$
i.e., for each $x\in\R^n,$
\begin{eqnarray*}
\lambda^Tf(x) - \bar f_{z_0} +\epsilon=\sigma_0-\sum_{i=1}^m\mu_ig_i(x)-\sum_{j=1}^p\nu_j\big(f_j(x)-f_j(z_0)\big)-\sigma(\lambda^Tf_{z_0}).
\end{eqnarray*}
So, $(\bar f_{z_0}-\epsilon,\sigma,\sigma_0,\mu,\nu)$ is a feasible solution of $(\widehat{\rm P}^k_{z_0})$ as soon as $k$ is large enough.
Hence we have $\bar f_{z_0}-\epsilon\leq \bar f_{z_0}^k.$
Finally, by weak duality between $({\rm P}^k_{z_0})$ and $(\widehat{\rm P}^k_{z_0}),$ we have $\bar f_{z_0}^k\leq f_{z_0}^k$ for all $k\geq k_0.$
Besides, as shown before that $f_{z_0}^k\leq \bar f_{z_0},$ we thus conclude that $\bar f_{z_0}-\epsilon\leq \bar f_{z_0}^k\leq \bar f_{z_0}.$
As $\epsilon>0$ is arbitrary, the desired result follows.
\qed
\end{proof}

We close this section by giving the next finite convergence result for the hierarchy of SDP relaxations of  \eqref{hybridmodel}, which is  sharper than the one of Theorem~\ref{theorem4}.
\begin{theorem}\label{theorem6}
Let $f_j\colon \R^n \to \R,$ $j=1,\ldots,p,$ and $g_i\colon \R^n \to \R,$ $i=1,\ldots,m,$ be convex polynomials.
Consider problem~\eqref{hybridmodel} at $z=z_0\in K,$ where $K$ is defined as \eqref{feasibleset}.
If Assumption~\ref{assumption2} holds$,$ then there exists an integer $\bar k$ such that $\bar f_{z_0}^k=f_{z_0}^k=\bar f_{z_0}$ for all $k\geq \bar k.$
In addition$,$ both $({\rm P}^k_{z_0})$ and $(\widehat{\rm P}^k_{z_0})$ attain their optimal solutions.
\end{theorem}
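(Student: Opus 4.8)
The plan is to run the argument of Theorem~\ref{theorem4} essentially verbatim, but fed by the \emph{sharper} representation of Theorem~\ref{theorem3} so that the resulting certificate lands in the feasible set of $(\widehat{\rm P}^k_{z_0})$ rather than $(\widehat{\rm Q}^k_{z_0})$. Since Assumption~\ref{assumption2} holds, Theorem~\ref{theorem3} applies and produces $\sigma,\sigma_0\in\Sigma[x]$, $\mu\in\R^m_+$ and $\nu\in\R^p_+$ with
\[
\lambda^Tf-\bar f_{z_0}=\sigma_0-\sum_{i=1}^m\mu_ig_i-\sum_{j=1}^p\nu_j\big(f_j-f_j(z_0)\big)-\sigma(\lambda^Tf_{z_0}),
\]
which is precisely the equality constraint of $(\widehat{\rm P}^k_{z_0})$ with $\gamma=\bar f_{z_0}$.

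First I would choose an integer $\bar k$ large enough that the (fixed, finite) degrees of $\sigma$ and $\sigma_0$ obey the budget of the relaxation, i.e. $\sigma\in\Sigma[x]_k$ and $\sigma_0\in\Sigma[x]_{k-d_f}$ for every $k\geq\bar k$. For such $k$ the tuple $(\bar f_{z_0},\sigma,\sigma_0,\mu,\nu)$ is feasible for $(\widehat{\rm P}^k_{z_0})$, whence $\bar f_{z_0}\leq\bar f_{z_0}^k$. Next I would sandwich the quantities using the bounds recorded ahead of the theorem: weak duality between \eqref{Pkmodel} and \eqref{DualPkmodel} gives $\bar f_{z_0}^k\leq f_{z_0}^k$, while feasibility of the moment vector of any feasible point of \eqref{hybridmodel} gives $f_{z_0}^k\leq\bar f_{z_0}$. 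Combining, $\bar f_{z_0}\leq\bar f_{z_0}^k\leq f_{z_0}^k\leq\bar f_{z_0}$ for all $k\geq\bar k$, so that $\bar f_{z_0}^k=f_{z_0}^k=\bar f_{z_0}$ and the tuple above is optimal for $(\widehat{\rm P}^k_{z_0})$, settling its attainment.

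It remains to exhibit an optimal solution of $({\rm P}^k_{z_0})$. By Assumption~\ref{assumption2}(ii) and Lemma~\ref{lemma2}, $\lambda^Tf$ is coercive, so $K_{z_0}$ is compact (cf. the remark following Theorem~\ref{theorem3}) and $\bar x\in\argmin_{x\in K_{z_0}}\lambda^Tf(x)$ exists. I would verify that $\bar y:=v_{2k}(\bar x)$ is feasible for $({\rm P}^k_{z_0})$: the moment matrix equals $v_k(\bar x)v_k(\bar x)^T\succeq0$; the scalar constraints reduce to $g_i(\bar x)\leq0$ and $f_j(\bar x)\leq f_j(z_0)$, valid because $\bar x\in K_{z_0}$; and the localizing constraint evaluates to $\big(\lambda^Tf(z_0)-\lambda^Tf(\bar x)\big)\,v_{k-d_f}(\bar x)v_{k-d_f}(\bar x)^T\succeq0$. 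Its objective value is $\lambda^Tf(\bar x)=\bar f_{z_0}=f_{z_0}^k$, so $\bar y$ is optimal.

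The substantive input is entirely Theorem~\ref{theorem3}; the remainder is degree bookkeeping plus the sandwich, exactly as in Theorem~\ref{theorem4}. The only point requiring genuine care is the last localizing constraint $M_{k-d_f}\big((-\lambda^Tf_{z_0})y\big)\succeq0$: its nonnegativity at $\bar y$ rests on $\lambda^Tf(\bar x)\leq\lambda^Tf(z_0)$, which holds because $z_0\in K_{z_0}$ and $\bar x$ minimizes $\lambda^Tf$ over $K_{z_0}$. I expect this to be the main thing to check rather than a deep obstacle.
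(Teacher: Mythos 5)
Your proposal is correct and follows essentially the same route as the paper's proof: invoke Theorem~\ref{theorem3} under Assumption~\ref{assumption2} to get the certificate, do the degree bookkeeping so that $(\bar f_{z_0},\sigma,\sigma_0,\mu,\nu)$ is feasible for $(\widehat{\rm P}^k_{z_0})$, sandwich via weak duality and $f_{z_0}^k\leq\bar f_{z_0}$, and take $\bar y=v_{2k}(\bar x)$ as the primal optimizer. The only difference is that you spell out the feasibility check for $\bar y$ (in particular the localizing constraint $M_{k-d_f}\big((-\lambda^Tf_{z_0})y\big)\succeq0$), which the paper simply asserts as clear.
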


\begin{proof}
The proof is similar to the one of Theorem~\ref{theorem4}.
It follows from Theorem~\ref{theorem3} that there exist $\sigma,\sigma_0\in\Sigma[x],$ $\bar\mu\in\R^m_+,$ and $\bar\nu\in\R^p_+$ such that
\begin{equation*}
\lambda^Tf-\bar f_{z_0}=\sigma_0-\sum_{i=1}^m\bar\mu_ig_i-\sum_{j=1}^p\bar\nu_j\big(f_j-f_j(z_0)\big)-\sigma(\lambda^Tf_{z_0}).
\end{equation*}
Let $\bar k\geq\max\{\deg\sigma_0,\deg\sigma+d_f,\max_i\{\deg g_i\}\}.$
Then, $(\bar f_{z_0},\sigma,\sigma_0,\bar\mu,\bar\nu)$ is a feasible solution of  $(\widehat{\rm P}^k_{z_0})$ for $k=\bar k,$
and so, we have $\bar f_{z_0}\leq \bar f^{\bar k}_{z_0}.$
Moreover, we have already seen that $\bar{f}_{z_0}^{k}\leq\bar{f}_{z_0}$ for all $k\geq\bar k.$
Thus, we have $\bar{f}_{z_0}^{k}=\bar{f}_{z_0}$ for all $k\geq \bar k.$ In fact, $(\bar f_{z_0},\sigma,\sigma_0,\bar\mu,\bar\nu)$ is an optimal solution to  $(\widehat{\rm P}^k_{z_0})$ for all $k\geq \bar k.$

On the other hand, by the weak duality between $({\rm P}^k_{z_0})$ and $(\widehat{\rm P}^k_{z_0}),$ $\bar f_{z_0}^k\leq f_{z_0}^k$ for all $k\geq k_0.$
Thus, we conclude that $\bar f_{z_0}^k=\ f_{z_0}^k=\bar f_{z_0}$ for all $k\geq \bar k.$
In particular, it is clear that, for all $k\geq \bar k,$ $\bar y=v_{2k}(\bar x)$ is an optimal solution to  $({\rm P}^k_{z_0}),$ which completes the proof.
\qed
\end{proof}

\section{Finding efficient solutions}\label{sect:5}
Let $f_j\colon \R^n \to \R,$ $j=1,\ldots,p,$ and $g_i\colon \R^n \to \R,$ $i=1,\ldots,m,$ be convex polynomials.
Let $z\in K,$ where $K$ is defined as \eqref{feasibleset}, be given, and let $\bar y$ be an optimal solution to \eqref{Qkmodel} (or \eqref{Pkmodel}).
Moreover, let $r_i:=\lceil\deg g_i/2\rceil,$ $i=0,1,\ldots,m,$ and let $d_j:=\lceil\deg f_j/2\rceil,$ $j=1,\ldots,p,$
where the notation $\lceil a\rceil$ stands for the smallest integer greater than or equal to $a.$ Denote also $k_0:=\max\{\max_ir_i,\max_jd_j\}.$
If the flat extension condition holds, that is,
\begin{equation}\label{flat extension}
{\rm rank\,}M_k(\bar y)={\rm rank\,}M_{k-k_0}(\bar y),
\end{equation}
then there exist at least ${\rm rank\,}M_k(\bar y)$ optimal solutions to \eqref{hybridmodel} (see, e.g., \cite[Theorem~6.6]{Lasserre2015}), and they can be efficiently extracted by a suitable algorithm (see, e.g., \cite[Algorithm~6.9 in Section 6.1]{Lasserre2015}, \cite[Section~2]{Henrion2005}).
In addition, the flat extension condition \eqref{flat extension} guarantees a finite convergence of Lasserre's hierarchy, but the {\it converse} may not be true \cite[Example~1.1]{Nie2013}.

Recently, a weak condition of the flat extension condition \eqref{flat extension} was proposed by Nie \cite{Nie2013}. That is, there exists an integer $t\in [k_0, k]$ such that
\begin{equation}\label{flat truncation}
{\rm rank\,}M_t(\bar y)={\rm rank\,}M_{t-k_0}(\bar y).
\end{equation}
Also, we say that $\bar y$ has a \emph{flat truncation} if the condition \eqref{flat truncation} holds for some $t\in [k_0, k].$
Note that if $\bar y$ has a flat truncation, then we can find at least ${\rm rank\,}M_t(\bar y)$ optimal solutions to \eqref{hybridmodel}.
\begin{assumption}{\rm (cf. Assumption~2.1 in \cite{Nie2013})}\label{assumption3}
Let $\lambda\in{\rm int\,}\R^p_+$ be fixed.
For a given $z\in K,$ where $K$ is defined as \eqref{feasibleset}$,$
 there exists $\varrho\in \mathcal{Q}(-g,-f_z)$ such that for every $I\subseteq\{1,\ldots,m\},$ $J\subseteq\{1,\ldots,p\},$ and
\begin{eqnarray*}
\mathcal{V}_{z,I,J}:=\{x\in\R^n \hspace{-1mm}&\colon&\hspace{-1mm} \textrm{there exist } \mu_i\geq0, \ i\in I, \textrm{ and } \nu_j\geq0, \ j\in J, \textrm{ such that } \\
&&\hspace{-1mm} \sum_{j=1}^p\lambda_j\nabla f_j(x)+\sum_{i\in I}\mu_i\nabla g_i(x)+\sum_{j\in J}\nu_j\nabla f_j(x)=0, \\
&&\hspace{-1mm}g_i(x)=0 \ (\forall i\in I), \ f_j(x)-f_j(z)=0 \ (\forall j\in J)\},
\end{eqnarray*}
the intersection $\mathcal{V}_{z,I,J}\cap \mathcal{S}_z\cap\mathcal{P}$ is finite, where
$\mathcal{S}_z:=\{x\in\R^n:\lambda^Tf(x)=\bar f_z\},$ and $\mathcal{P}:=\{x\in\R^n: \varrho(x)\geq0\}.$
\end{assumption}

\begin{remark}
{\rm
\begin{itemize}
\item[{\rm (i)}] It is worth noting that Assumption~\ref{assumption3} holds generically, which means if the polynomials $f_j,$ $j=1,\ldots,p,$ and $g_i,$ $i=1,\ldots,m,$ are generic (not necessarily convex), then for the given $z \in K,$ the set $\mathcal{V}_{z,I,J}$ is finite for every index set $I\subseteq\{1,\ldots,m\}$ and $J\subseteq\{1,\ldots,p\},$ and so Assumption~\ref{assumption3} holds by choosing $\varrho=0$ (for more details, see \cite[Proposition~2.1]{Nie2009}).
\item[{\rm (ii)}] It is also worth emphasizing that under Assumption~\ref{assumption3}, problem~\eqref{hybridmodel} generically admits a unique optimal solution for any fixed $\lambda\in{\rm int\,}\R^p_+.$ Indeed, problem~\eqref{hybridmodel} is a convex polynomial optimization problem.
    Also, Assumption~\ref{assumption3} implies that  $({\rm P}_{z_0})$ has finite optimal solutions (see \cite{Nie2013}). This, combining with the convexity assumption, implies the uniqueness of the optimal solution to problem~\eqref{hybridmodel}.
\end{itemize}
}
\end{remark}

In fact, Assumption~\ref{assumption3} guarantees that the flat truncation \eqref{flat truncation} is not only a sufficient condition, but also a {\it necessary} condition for the finite convergence of the Lasserre hierarchy \cite[Theorem~2.2]{Nie2013}.

\medskip
Along with these facts, we propose the following result.
\begin{theorem}\label{theorem7}
Let $f_j\colon \R^n \to \R,$ $j=1,\ldots,p,$ and $g_i\colon \R^n \to \R,$ $i=1,\ldots,m,$ be convex polynomials$,$ consider problem~\eqref{hybridmodel} at $z=z_0\in K,$ where $K$ is defined as \eqref{feasibleset}.
If Assumptions~\ref{assumption1} and \ref{assumption3} hold$,$ then $\bar x:=(\bar y_\alpha)_{|\alpha|=1}$ is an efficient solution to problem $\eqref{modelsetting},$ where $\bar y$ is an optimal solution to $({\rm Q}^k_{z_0})$  for some sufficiently large $k.$
\end{theorem}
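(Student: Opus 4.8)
The plan is to prove the result by showing that the point $\bar x:=(\bar y_\alpha)_{|\alpha|=1}$ recovered from the moment solution is in fact the unique optimal solution to problem $({\rm P}_{z_0})$, and then to invoke Proposition~\ref{proposition1} to conclude that $\bar x$ is an efficient solution to problem~\eqref{modelsetting}. First I would apply Theorem~\ref{theorem4}: since Assumption~\ref{assumption1} holds, the hierarchy $({\rm Q}^k_{z_0})$ enjoys finite convergence, so there is an integer $\bar k$ with $\rho_{z_0}^k=\bar f_{z_0}$ for all $k\geq\bar k$, and $({\rm Q}^k_{z_0})$ attains an optimal solution $\bar y$. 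Next, because Assumption~\ref{assumption3} is precisely the hypothesis of \cite[Theorem~2.2]{Nie2013}, the finite convergence just established forces the optimal moment sequence $\bar y$ to satisfy the flat truncation condition~\eqref{flat truncation}, i.e.\ there is some $t\in[k_0,k]$ with ${\rm rank\,}M_t(\bar y)={\rm rank\,}M_{t-k_0}(\bar y)$.

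With flat truncation in hand, the flat extension theory (see \cite[Theorem~6.6]{Lasserre2015}) guarantees that $\bar y$ admits a finitely atomic representing measure $\phi$, supported on ${\rm rank\,}M_t(\bar y)$ points, each of which is a global optimal solution to $({\rm P}_{z_0})$. The crucial step is to identify this support. Since problem~$({\rm P}_{z_0})$ is a convex program, its set of optimal solutions is convex; on the other hand, Assumption~\ref{assumption3} (via \cite{Nie2013}) ensures that $({\rm P}_{z_0})$ has only finitely many optimal solutions. A nonempty, finite, convex set must be a singleton, so $({\rm P}_{z_0})$ possesses a unique optimal solution, say $x^\ast$. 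Consequently the representing measure collapses to the Dirac mass $\phi=\delta_{x^\ast}$, whence ${\rm rank\,}M_t(\bar y)=1$ and the degree-one moments recover the minimizer exactly: $\bar x=(\bar y_\alpha)_{|\alpha|=1}=x^\ast$. Thus $\bar x$ is the optimal solution to $({\rm P}_{z_0})$.

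Finally, applying Proposition~\ref{proposition1} with $z_0\in K$: since $\bar x$ is an optimal solution to $({\rm P}_{z_0})$, it is also an optimal solution to $({\rm P}_{\bar x})$, and therefore an efficient solution to problem~\eqref{modelsetting}, which is the desired conclusion. The main obstacle I anticipate is the rigorous justification that the recovered first-order moments coincide with the true minimizer; this rests entirely on upgrading ``finitely many optimizers'' (furnished by Assumption~\ref{assumption3}) to ``a unique optimizer'' by exploiting the convexity of $({\rm P}_{z_0})$, and then arguing that the associated representing measure must be a single Dirac mass so that $(\bar y_\alpha)_{|\alpha|=1}$ equals $x^\ast$ rather than merely some convex combination of support atoms.
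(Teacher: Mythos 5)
Your proof is correct and follows essentially the same route as the paper's: finite convergence via Theorem~\ref{theorem4}, flat truncation of $\bar y$ via Nie's theorem under Assumption~\ref{assumption3}, uniqueness of the optimizer of $({\rm P}_{z_0})$ from finiteness plus convexity, the resulting rank-one moment matrix identifying $\bar x=(\bar y_\alpha)_{|\alpha|=1}$ as that optimizer, and Proposition~\ref{proposition1} to conclude efficiency. The only difference is presentational: you spell out the finite-convex-set-is-a-singleton argument and the Dirac-measure collapse explicitly, which the paper compresses into one line of its proof and a supporting remark.
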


\begin{proof}
By Theorem~\ref{theorem4}, for sufficiently large $k,$ $\bar \rho_{z_0}^k=\rho_{z_0}^k=\bar f_{z_0}$ and the optimal value of $(\widehat{\rm Q}^k_{z_0})$ is achievable.
It follows from \cite[Theorem~2.2]{Nie2013} that every optimal solution to $({\rm Q}^k_{z_0})$ has a flat truncation for some sufficiently large $k,$
i.e., there exists an integer $t\in [k_0, k]$ such that
\begin{equation*}
{\rm rank\,}M_t(\bar y)={\rm rank\,}M_{t-k_0}(\bar y),
\end{equation*}
where $\bar y$ is an optimal solution to $({\rm Q}^k_{z_0}),$ and so, the problem $({\rm P}_{z_0})$ has at least ${\rm rank\,}M_t(\bar y)$ optimal solutions.

On the other hand, Assumption~\ref{assumption3} implies that $({\rm P}_{z_0})$ has a unique solution, thus ${\rm rank\,}M_t(\bar y)$ and ${\rm rank\,}M_{t-k_0}(\bar y)$ should be equal to $1,$ and so, necessarily, $M_t(\bar y)=v_t(\bar x)v_t(\bar x)^T$ for some $\bar x\in\R^n.$
Moreover, since $\bar y$ is a feasible solution of $({\rm Q}^k_{z_0}),$  we can easily see that $\bar x$ is also a feasible solution of $({\rm P}_{z_0}).$
It means that $\bar y$ is the vector of moments up to order $2t$ of the Dirac measure $\delta_{\bar x}$ at $\bar x\in K_{z_0},$ i.e., $\bar y=v_{2t}(\bar x).$
This yields that $\bar x$ is an optimal solution to $({\rm P}_{z_0}).$ In particular, $\bar x=(\bar y_\alpha)_{|\alpha|=1}.$
It follows from Proposition~\ref{proposition1} that $(\bar y_\alpha)_{|\alpha|=1}$ is an efficient solution to \eqref{modelsetting}.
\qed
\end{proof}

The following lemma shows that a weak condition of Assumption~\ref{assumption2} (ii) implies the validity of Assumption~\ref{assumption3}.
\begin{lemma}\label{lemma3}
Let $z_0\in K,$ where $K$ is defined as \eqref{feasibleset}$,$ be given.
Assume that there exists $\tilde x\in\R^n$ such that the Hessian $\nabla^2(\lambda^Tf)(\tilde x)$ is positive definite.
Then Assumption~\ref{assumption3} holds.
\end{lemma}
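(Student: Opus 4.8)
The plan is to select the trivial certificate $\varrho=0$ in Assumption~\ref{assumption3} and to show that the positive-definiteness hypothesis forces each set $\mathcal{V}_{z_0,I,J}$ to contain at most one point. First I would apply Lemma~\ref{lemma2}: since $\nabla^2(\lambda^Tf)(\tilde x)\succ0$ at some $\tilde x\in\R^n$, the polynomial $\lambda^Tf$ is strictly convex (and coercive) on $\R^n$. Next, observe that $0\in\mathcal{Q}(-g,-f_{z_0})$ (take all sum-of-squares multipliers equal to zero), so choosing $\varrho=0$ gives $\mathcal{P}=\R^n$. With this choice the claim of Assumption~\ref{assumption3} reduces to showing that $\mathcal{V}_{z_0,I,J}\cap\mathcal{S}_{z_0}$ is finite for every $I\subseteq\{1,\ldots,m\}$ and $J\subseteq\{1,\ldots,p\}$, and it in fact suffices to prove the stronger statement that $\mathcal{V}_{z_0,I,J}$ itself is finite, since $\mathcal{V}_{z_0,I,J}\cap\mathcal{S}_{z_0}\cap\mathcal{P}\subseteq\mathcal{V}_{z_0,I,J}$.

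Fix $I$ and $J$ and introduce the convex set $C_{I,J}:=\{x\in\R^n:g_i(x)\leq0\ (i\in I),\ f_j(x)\leq f_j(z_0)\ (j\in J)\}$. The key step is to recognize that any $x\in\mathcal{V}_{z_0,I,J}$ is a global minimizer of $\lambda^Tf$ over $C_{I,J}$. Indeed, the defining gradient identity of $\mathcal{V}_{z_0,I,J}$, together with the multiplier signs $\mu_i,\nu_j\geq0$ and the active-constraint equalities $g_i(x)=0$ ($i\in I$) and $f_j(x)-f_j(z_0)=0$ ($j\in J$), is precisely the Karush--Kuhn--Tucker system for the convex program $\min_{x\in C_{I,J}}\lambda^Tf(x)$; complementary slackness holds automatically because every listed constraint is active with equality, and $x$ is feasible for $C_{I,J}$ for the same reason. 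Since $\lambda^Tf$ and all the $g_i,f_j$ are convex, KKT stationarity is sufficient for global optimality, so such an $x$ minimizes $\lambda^Tf$ over $C_{I,J}$.

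Finally, I would invoke strict convexity: a strictly convex function admits at most one minimizer over a convex set, because two distinct minimizers would force their midpoint---which lies in $C_{I,J}$ by convexity---to attain a strictly smaller value, a contradiction. Hence $\mathcal{V}_{z_0,I,J}$ has at most one element, so $\mathcal{V}_{z_0,I,J}\cap\mathcal{S}_{z_0}\cap\mathcal{P}$ is finite for all $I,J$, establishing Assumption~\ref{assumption3} with $\varrho=0$.

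I expect the only delicate point to be the verification that the conditions defining $\mathcal{V}_{z_0,I,J}$ coincide exactly with the KKT optimality system for the convex program over $C_{I,J}$ (in particular, that the equality constraints supply automatic complementary slackness and feasibility), together with the observation that the uniqueness argument uses only strict convexity and requires neither existence of a minimizer, nor compactness, nor a Slater condition. Everything else is routine.
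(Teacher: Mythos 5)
Your proof is correct, and it takes a genuinely leaner route than the paper's. The paper first invokes coercivity (also from Lemma~\ref{lemma2}) to obtain existence of the unique minimizer $\bar x$ of $({\rm P}_{z_0})$, treats separately the case where the active set at $\bar x$ is empty, and then, for each pair $(I,J)$, argues by contradiction that $\mathcal{V}_{z_0,I,J}\cap\mathcal{S}_{z_0}$ is either $\{\bar x\}$ or empty: a point $\hat x$ of that intersection satisfies the KKT system of the relaxed program over your set $C_{I,J}$ and has value $\bar f_{z_0}$, so both $\hat x$ and $\bar x$ would be optimal solutions of that relaxed program, contradicting uniqueness under strict convexity. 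Your argument uses the same two ingredients (strict convexity of $\lambda^Tf$ via Lemma~\ref{lemma2}, and sufficiency of KKT conditions for convex programs, for which no constraint qualification is needed) but skips the detour through $\bar x$ and $\mathcal{S}_{z_0}$ entirely: every point of $\mathcal{V}_{z_0,I,J}$ is already a global minimizer of $\lambda^Tf$ over the convex set $C_{I,J}$, and a strictly convex function has at most one minimizer over a convex set, so $\mathcal{V}_{z_0,I,J}$ is at most a singleton. This proves a stronger statement than the paper does (finiteness of $\mathcal{V}_{z_0,I,J}$ itself, not merely of its intersection with $\mathcal{S}_{z_0}\cap\mathcal{P}$), requires neither existence of a minimizer nor coercivity nor the paper's case analysis on the active set, and handles $I=J=\emptyset$ uniformly (there $\mathcal{V}_{z_0,\emptyset,\emptyset}$ is the critical set of $\lambda^Tf$, again at most one point). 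Your explicit choice $\varrho=0\in\mathcal{Q}(-g,-f_{z_0})$, giving $\mathcal{P}=\R^n$, is legitimate and matches the choice made in the remark following Assumption~\ref{assumption3} for the generic case; your verification that feasibility and complementary slackness are automatic on $\mathcal{V}_{z_0,I,J}$, because the constraints indexed by $I$ and $J$ hold there with equality, is exactly the delicate point, and you have it right.
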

\begin{proof}
Let $z_0\in K$ be fixed.
Since the Hessian $\sum_{j=1}^p\lambda_j\nabla^2f_j(\tilde x)$ is positive definite, it follows from Lemma~\ref{lemma2} that the polynomial $\lambda^Tf$ is coercive and strictly convex.
This implies that there is a unique optimal solution $\bar x$ to the problem $({\rm P}_{z_0})$.

Let us denote the set of active constraints by
\begin{equation*}
I(\bar x)\cup J(\bar x):=\{i:g_i(\bar x)=0\}\cup \{j:f_j(\bar x)-f_j(z_0)=0\}.
\end{equation*}
Then, without loss of generality, we can assume that the set $I(\bar x)\cup J(\bar x)$ is nonempty; otherwise, we have $\nabla \left(\lambda^Tf\right)(\bar x) = 0.$
This implies that $\mathcal{S}_{z_0}=\{x\in\R^n:\lambda^Tf(x)=\bar f_{z_0}\}=\{\bar x\},$ and so, in this case, Assumption~\ref{assumption3} obviously holds.

Now, let $I\subset \{1,\ldots,m\}$ and $J\subset\{1,\ldots,p\}$ be any fixed.
To finish the proof of this lemma, it suffers to show that the following two statements hold:
\begin{itemize}
  \item[\textrm{(i)}] If $\bar x\in \mathcal{V}_{z_0,I,J}\cap\mathcal{S}_{z_0},$ then $\mathcal{V}_{z_0,I,J}\cap\mathcal{S}_{z_0}=\{\bar x\};$
  \item[\textrm{(ii)}] otherwise, $\mathcal{V}_{z_0,I,J}\cap\mathcal{S}_{z_0}=\emptyset.$
\end{itemize}
We first prove that the assertion (i) is true.
Assume to the contrary that there exists $\hat x\in\R^n$ such that $\hat x\neq \bar x$ and $\hat x\in \mathcal{V}_{z_0,I,J}\cap\mathcal{S}_{z_0}.$
Then, there exist $\hat\mu_i\geq0,$ $i\in I,$ and $\hat\nu_j\geq0,$ $j\in J,$ such that
\begin{equation}
\begin{split}\label{lemma3_rel1}
\sum_{j=1}^p\lambda_j\nabla f_j(\hat x)+\sum_{i\in I}\hat\mu_i\nabla g_i(\hat x)+\sum_{j\in J}\hat\nu_j\nabla f_j(\hat x)=0&,\\
g_i(\hat x)=0, \ i\in I,&\\
f_j(\hat x)-f_j(z_0)=0, \ j\in J.&
\end{split}
\end{equation}
Now, consider the convex optimization problem
{\leqnomode{
\begin{align}\label{newmodel}
\min\limits_{x\in\R^n} &\quad \lambda^Tf(x)  \tag{${\rm \widehat{P}}_{z_0}$} \\
 {\rm s.t. } &\quad g_i(x) \leq0, \ i\in I, \nonumber \\
 &\quad f_j(x)\leq f_j(z_0), \ j\in J. \nonumber
\end{align}
Then $\hat x$ is a feasible solution of problem~\eqref{newmodel}
It follows from \eqref{lemma3_rel1} that $\hat x$ is indeed an optimal solution to problem~\eqref{newmodel} with the optimal value $\lambda^Tf(\hat x)=\bar f_{z_0}.$
On the other hand, since the set $K_{z_0}$ is clearly a subset of the feasible set of problem~\eqref{newmodel}, $\bar x$ is a feasible solution to problem~\eqref{newmodel} with the value $\bar f_{z_0},$
and so, $\bar x$ is also an optimal solution of problem~\eqref{newmodel}, which contradicts to the fact that the problem~\eqref{newmodel} has a unique optimal solution (due to the strictly convexity of $\lambda^Tf$).
Thus, the statement (i) holds.
}}

(ii) Suppose to the contrary that the set $\mathcal{V}_{z_0,I,J}\cap\mathcal{S}_{z_0}$ is nonempty.
For simplicity, let $\hat x\in \mathcal{V}_{z_0,I,J}\cap\mathcal{S}_{z_0}.$
Then similar to the proof of assertion (i), we see that $\hat x$ is an optimal solution to problem~\eqref{newmodel},
and so, we arrive at a contradiction to the fact that $\bar x$ is the unique optimal solution to problem~\eqref{newmodel}.

Since $I$ and $J$ are arbitrary, as a consequence, for every $I\subset \{1,\ldots,m\}$ and $J\subset\{1,\ldots,p\},$ the set $\mathcal{V}_{z_0,I,J}\cap\mathcal{S}_{z_0}$ is finite,
and thus, the desired result follows.
\qed
\end{proof}

The following example shows that Lemma~\ref{lemma3} may fail if the Hessian $\nabla^2(\lambda^Tf)(x)$ is not positive definite for all $x.$
\begin{example}{\rm
For simplicity, let us consider the following $2$-dimensional scalar convex polynomial optimization problem:
\begin{eqnarray*}
({\rm P}_z) \quad & \min\limits_{(x_1,x_2)\in\R^2} & f_1(x_1, x_2) \\
& {\rm s.t. } & g_1(x_1,x_2) \leq0, \\
& & f_1(x_1,x_2)\leq f_1(z_1,z_2),
\end{eqnarray*}
where $f_1(x_1,x_2):=(x_1-x_2)^2$ and $g_1(x_1,x_2)=x_1-x_2+1.$
we first note that a simple calculation shows that the Hessian $\nabla^2f_1(x_1,x_2)$ is not positive definite for all $(x_1,x_2)\in\R^2.$
Let $z_0=(0,1)\in \{(x_1,x_2)\in\R^2:g_1(x_1,x_2)\leq0\}.$
Then it is easily verified that $\mathcal{S}_{z_0}=\{(x_1,x_2)\in\R^2:(x_1-x_2)^2=1\}.$
Moreover, for $I=\{1\}$ and $J=\emptyset,$ we have
\begin{eqnarray*}
\mathcal{V}_{z_0,\{1\},\emptyset}&=&\{(x_1,x_2)\in\R^2 : \textrm{there exists } \mu_1\geq0 \textrm{ such that } x_1-x_2+1=0 ,\\
&&\hspace{30mm}\left(\begin{array}{r}
 x_1-x_2 \\
-x_1-x_2 \\
\end{array}\right)
+\mu_1\left(\begin{array}{r}
 1 \\
-1 \\
\end{array}\right)
=\left(\begin{array}{c}
 0 \\
 0 \\
\end{array}\right) \},\\
&=&\{(x_1,x_2)\in\R^2:x_1-x_2+1=0\},
\end{eqnarray*}
and so, we get $\mathcal{V}_{z_0,\{1\},\emptyset}\cap \mathcal{S}_{z_0}=\{(x_1,x_2)\in\R^2:x_1-x_2+1=0\}.$
Let us arbitrary choose a polynomial $\varrho$ in the quadratic module $\mathcal{Q}\big(-g_1,-f_1+f_1(z_0)\big).$
Observe that $\varrho(x)\geq0$ for all $x\in \mathcal{V}_{z_0,\{1\},\emptyset}\cap \mathcal{S}_{z_0},$ i.e.,
\begin{equation*}
\mathcal{V}_{z_0,\{1\},\emptyset}\cap \mathcal{S}_{z_0}\cap \mathcal{P}=\{(x_1,x_2)\in\R^2:x_1-x_2+1=0\},
\end{equation*}
thereby, Assumption~\ref{assumption3} does not hold.
}\end{example}

We are now ready to provide our final result which shows that finding efficient solutions to \eqref{modelsetting} can be done via solving the Lasserre-tpye hierarchy of SDP relaxations.
\begin{theorem}\label{theorem8}
Let $f_j\colon \R^n \to \R,$ $j=1,\ldots,p,$ and $g_i\colon \R^n \to \R,$ $i=1,\ldots,m,$ be convex polynomials$,$ consider problem \eqref{hybridmodel} at $z=z_0\in K,$ where $K$ is defined as \eqref{feasibleset}.
If Assumption~\ref{assumption2} holds$,$ then $\bar x:=(\bar y_\alpha)_{|\alpha|=1}$ is an efficient solution to \eqref{modelsetting}$,$ where $\bar y$ is an optimal solution of $({\rm P}^k_{z_0})$  for some sufficiently large $k.$
\end{theorem}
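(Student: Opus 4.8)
The plan is to mirror the argument of Theorem~\ref{theorem7}, but replacing the coarse hierarchy \eqref{Qkmodel}--\eqref{DualQkmodel} by the sharp hierarchy \eqref{Pkmodel}--\eqref{DualPkmodel} and using Assumption~\ref{assumption2} in place of the pair of Assumptions~\ref{assumption1} and \ref{assumption3}. The crucial observation is that Assumption~\ref{assumption2} already supplies everything the earlier proof needed. Part (ii) furnishes a point, namely $\bar x\in\argmin_{x\in K_{z_0}}\lambda^Tf(x)$, at which $\nabla^2(\lambda^Tf)(\bar x)=\sum_{j=1}^p\lambda_j\nabla^2f_j(\bar x)\succ0$; hence by Lemma~\ref{lemma2} the polynomial $\lambda^Tf$ is strictly convex and coercive, so $({\rm P}_{z_0})$ has a \emph{unique} optimal solution. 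The very same hypothesis, fed into Lemma~\ref{lemma3}, yields that Assumption~\ref{assumption3} holds. Thus both ingredients used in Theorem~\ref{theorem7} (finite convergence together with a flat truncation, plus a unique minimizer to collapse the rank) are available here without assuming Assumption~\ref{assumption3} separately.

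Concretely, I would proceed in the following order. First, invoke Theorem~\ref{theorem6}: under Assumption~\ref{assumption2} there is an integer $\bar k$ with $\bar f_{z_0}^k=f_{z_0}^k=\bar f_{z_0}$ for all $k\geq\bar k$, and $({\rm P}^k_{z_0})$ attains an optimal solution $\bar y$. This records both the finite convergence of the sharp hierarchy and the existence of the optimal moment vector $\bar y$ appearing in the statement. Second, since Assumption~\ref{assumption3} holds (by Lemma~\ref{lemma3}) and the hierarchy converges finitely, I would apply \cite[Theorem~2.2]{Nie2013} to conclude that $\bar y$ has a flat truncation: there is $t\in[k_0,k]$ with ${\rm rank\,}M_t(\bar y)={\rm rank\,}M_{t-k_0}(\bar y)$, so that $({\rm P}_{z_0})$ admits at least ${\rm rank\,}M_t(\bar y)$ optimal solutions.

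Third, I would use uniqueness to force the rank down to one: because $({\rm P}_{z_0})$ has a single optimizer, both ${\rm rank\,}M_t(\bar y)$ and ${\rm rank\,}M_{t-k_0}(\bar y)$ equal $1$, whence $M_t(\bar y)=v_t(\bar x)v_t(\bar x)^T$ for some $\bar x\in\R^n$ and $\bar y=v_{2t}(\bar x)$ is the moment vector of the Dirac measure $\delta_{\bar x}$. Evaluating the linearized moment constraints of $({\rm P}^k_{z_0})$ against this Dirac measure gives $g_i(\bar x)\leq0$ and $f_j(\bar x)\leq f_j(z_0)$, so $\bar x\in K_{z_0}$ is feasible and therefore optimal for $({\rm P}_{z_0})$, with $\bar x=(\bar y_\alpha)_{|\alpha|=1}$. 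Finally, Proposition~\ref{proposition1} converts the optimality of $\bar x$ for $({\rm P}_{z_0})$ into the asserted efficiency for \eqref{modelsetting}.

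The step I expect to be delicate is the second one. The statement \cite[Theorem~2.2]{Nie2013} is formulated for the standard moment--sum-of-squares hierarchy, whereas $({\rm P}^k_{z_0})$ is the sharpened, convexity-exploiting relaxation in which $g_i\leq0$ and $f_j\leq f_j(z_0)$ enter only through their first-moment linearizations while the extra polynomial $\lambda^Tf_{z_0}$ carries a genuine localizing matrix $M_{k-d_f}\big((-\lambda^Tf_{z_0})\bar y\big)\succeq0$. One must therefore verify that the flat-extension/extraction machinery and the necessity direction of \cite{Nie2013} remain valid for this modified quadratic-module structure, and that Assumption~\ref{assumption3}, expressed through the sets $\mathcal V_{z_0,I,J}$, is indeed the correct hypothesis in this setting. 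For the rank-one case relevant here this causes no real trouble, since evaluating a Dirac measure against a linear moment constraint simply evaluates the underlying polynomial at the atom, so the feasibility extraction in the third step is immediate.
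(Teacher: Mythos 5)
Your proposal is correct and follows essentially the same route as the paper: the paper's own proof invokes Theorem~\ref{theorem6} for finite convergence and dual attainment, applies Lemma~\ref{lemma3} to deduce Assumption~\ref{assumption3} from Assumption~\ref{assumption2}(ii), and then repeats the flat-truncation/rank-one extraction argument of Theorem~\ref{theorem7} followed by Proposition~\ref{proposition1}. Your write-up simply makes explicit the steps the paper compresses into ``similar arguments as in the proof of Theorem~\ref{theorem7},'' and your cautionary remark about transferring \cite[Theorem~2.2]{Nie2013} to the sharpened hierarchy $({\rm P}^k_{z_0})$ is a fair point that the paper itself passes over silently.
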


\begin{proof}
It follows from Theorem~\ref{theorem6} that for sufficiently large $k,$ $\bar f_{z_0}^k=f_{z_0}^k=\bar f_{z_0}$ and the optimal value of $(\widehat{\rm P}^k_{z_0})$ is achievable.
Also, by Lemma~\ref{lemma3}, Assumption~\ref{assumption3} holds, and the rest of the proof of this theorem can be constructed by using similar arguments as in the proof of Theorem~\ref{theorem7}.
\qed
\end{proof}

\begin{remark}{\rm
By employing the hybrid method to \eqref{modelsetting}, Algorithm~\ref{algorithm1} below shows that finding  efficient solutions to \eqref{modelsetting} can be done via solving hierarchies of SDP relaxations.
It is worth mentioning that since the purpose of this paper is to find efficient solutions to \eqref{modelsetting}, the assumption of the positive definiteness of the Hessian of the associated Lagrangian (resp., the objective function) at a saddle-point (resp., an optimal solution) may be theoretical rather than practical.
On the other hand, if the weighted sum polynomial $\lambda^Tf$ is strongly convex, then, by Lemma~\ref{lemma1}, the problem \eqref{hybridmodel} has an optimal solution (in fact, it is unique).
In addition, since the Hessian $\nabla^2(\lambda^Tf)$ of the weighted sum polynomial $\lambda^Tf$ is positive definite on $\R^n,$ we see that Assumption~\ref{assumption2} (ii) holds.
For simplicity, we illustrate our results by an example which satisfies all of the assumptions described above (see, Example~\ref{example1}).}
\end{remark}
\begin{algorithm}[ht]
\begin{algorithmic}
\caption{Finding Efficient Solutions to \eqref{modelsetting}}\label{algorithm1}
\STATE \textbf {Input} :  Fix $\lambda\in{\rm int\,}\R^p_+.$
\STATE \textbf {Step 0}. Set $k=k_0.$
\STATE \textbf {Step 1}. Pick $z\in\R^n$ arbitrarily.
\STATE \textbf {Step 2}. If $K_z = \emptyset,$ then return to Step 1; otherwise$,$ go to Step 3.
\STATE \textbf {Step 3}. Solve  $({\rm Q}^k_z)$ (or $({\rm P}^k_z)$) and obtain its optimal solution $\bar y.$
\STATE \textbf {Step 4}. If the flat truncation condition \eqref{flat truncation} is satisfied, go to Step 5; otherwise, set $k=k+1$ and go back to Step 3.
\STATE \textbf {Step 5}. Extract a unique optimal solution $\bar x$ to problem~\eqref{hybridmodel} from $\bar y.$
\STATE \textbf {Output} : Efficient solution $\bar x$ (by Proposition~\ref{proposition1}).
\end{algorithmic}
\end{algorithm}

We close the section by designing the following example, which illustrates the work of finding efficient solutions to \eqref{modelsetting} with convex polynomial data via Algorithms~\ref{algorithm1}.
\begin{example}\label{example1}{\rm
Consider the following $2$-dimensional multi-objective convex polynomial optimization problem:
\begin{eqnarray*}
({\rm MP})_1
& \min\limits_{(x_1,x_2)\in\R^2} &\big(f_1(x_1,x_2),f_2(x_1,x_2),f_3(x_1,x_2)\big) \\
& {\rm s.t. } & g_1(x_1,x_2)\leq0,\\
&& g_2(x_1,x_2)\leq0,
\end{eqnarray*}
where $f_1(x_1,x_2)=(x_1-3)^2+(x_2-2)^2,$ $f_2(x_1,x_2)=x_1+x_2,$ $f_3(x_1,x_2)=x_1+2x_2,$ $g_1(x_1,x_2)=-x_1,$ and $g_2(x_1,x_2)=-x_2.$
Let $K_1=\{(x_1,x_2)\in\R^2 : -x_1\leq0, \ -x_2\leq0\}=\R^2_+$ be the feasible set of $({\rm MP})_1.$

It is worth noting that the best known set of efficient solutions to $({\rm MP})_1$ is as follows:
\begin{eqnarray*}
\bigg\{(x_1,x_2)\in\R^2&:&\textrm{either} \ 0\leq x_1\leq 0, \ x_2=0 \ \textrm{or} \ x=t_1\left(\begin{array}{c}
  1 \\
  0 \\
\end{array}\right)
+t_2\left(\begin{array}{c}
  2 \\
  0 \\
\end{array}\right)
+t_3\left(\begin{array}{c}
  3 \\
  2 \\
\end{array}\right), \\
&&  \ t_1+t_2+t_3=1, \ t_i\geq0, \ i=1,2,3\bigg\}
\end{eqnarray*}
(see, e.g., \cite[Example~2 in Chapter~6]{Chankong1983}).

Now, consider the following (scalar) polynomial optimization problem with $\lambda=(\lambda_1,\lambda_2,\lambda_3):=(1,1,1),$
\begin{eqnarray*}
({\rm P}_z)_1
& \min\limits_{(x_1,x_2)\in\R^2} &(x_1-3)^2+(x_2-2)^2+2x_1+3x_2 \\
& {\rm s.t. } & -x_1\leq 0, \ -x_2\leq 0, \\
&&(x_1-3)^2+(x_2-2)^2\leq (z_1-3)^2+(z_2-2)^2,\\
&&x_1+x_2\leq z_1+z_2,\\
&&x_1+2x_2\leq z_1+2z_2.
\end{eqnarray*}
Let $(z_1,z_2)\in K_1$ be any given.
Then we see that the Slater-type condition for $({\rm P}_z)_1$ holds except $(z_1,z_2)=(3,2).$
On the other hand, if we choose $(z_1,z_2)=(3,2),$ then the feasible set of $({\rm P}_z)_1$ is $\{(3,2)\}.$
So, in this case, we have $(3,2)$ is an optimal solution to $({\rm P}_z)_1,$ and so is an efficient solution to $({\rm MP})_1.$
Moreover, a simple computation yields that the Hessian $\sum_{j=1}^3\nabla^2f_j$ is positive definite on $\R^2,$
and hence, for all $z:=(z_1,z_2)\in K_1\backslash\{(3,2)\},$ all of the assumptions of Theorem~\ref{theorem8} are satisfied.

On the other hand, for $k\geq1,$ the hierarchy semidefine programming problem, related with $({\rm P}_{z})_1,$ reads as follows
\begin{eqnarray*}
({\rm P}^k_z)_1 \quad
&\inf\limits_{y\in\R^{s(2k)}} & \sum_{\alpha\in\N_{2k}^2}\sum_{j=1}^3(f_j)_\alpha y_\alpha\\
& {\rm s.t. } & M_k(y)\succeq0,\\
&&\sum_{\alpha\in\N^2_{2k}}(g_i)_\alpha y_\alpha\leq0, \  i=1,2,\\
& & \sum_{\alpha\in\N^2_{2k}}(f_j)_\alpha y_\alpha\leq f_j(z), \ j=1,2,3,\\
& & M_{k-1}\big((-\lambda^Tf_z)y\big)\succeq0.
\end{eqnarray*}

Now, let us pick $z=(1,1)\in K_1.$
Then we consider the problem $({\rm P}^k_z)_1$ with $k=1$
\begin{eqnarray*}
({\rm P}^1_{z})_1 \quad
&\inf\limits_{y\in\R^{6}} & 13-4y_{(1,0)}-y_{(0,1)}+y_{(2,0)}+y_{(0,2)}\\
& {\rm s.t. } & M_1(y)=\left(
         \begin{array}{ccc}
           1 & y_{(1,0)} & y_{(0,1)} \\
           y_{(1,0)} & y_{(2,0)} & y_{(1,1)} \\
           y_{(0,1)} & y_{(1,1)} & y_{(0,2)} \\
         \end{array}
       \right)\succeq0,\\
&&-y_{(1,0)}\leq0, \ -y_{(0,1)}\leq0,\\
& & 13-6y_{(1,0)}-4y_{(0,1)}+y_{(2,0)}+y_{(0,2)}\leq 5,\\
&& y_{(1,0)}+y_{(0,1)}\leq2, \ y_{(1,0)}+2y_{(0,1)}\leq3,\\
&& M_{0}((9-\lambda^Tf)y)=-4+4y_{(1,0)}+y_{(0,1)}-y_{(2,0)}-y_{(0,2)}\geq0.
\end{eqnarray*}
Solving $({\rm P}^1_{z})_1$ using GloptiPoly~3 \cite{Henrion2009} yields an optimal value $8.875$ and an optimal solution
\begin{eqnarray*}
\bar y=(1,1.7500,0.2500,3.0625, 0.4375,0.0625).
\end{eqnarray*}
Then, we easily check that ${\rm rank\,}M_1(\bar y)=1={\rm rank\,}M_{0}(\bar y),$ and so, $\bar x=(\bar y_\alpha)_{|\alpha|=1}=(1.75,0.25)$ is an optimal solution to $({\rm P}_{z})_1.$
It follows from Proposition~\ref{proposition1} that $\bar x=(1.75,0.25)$ is an efficient solution to $({\rm MP})_1.$
\begin{figure}[t]
  \vspace{-4mm}
\includegraphics[width=1.0\textwidth]{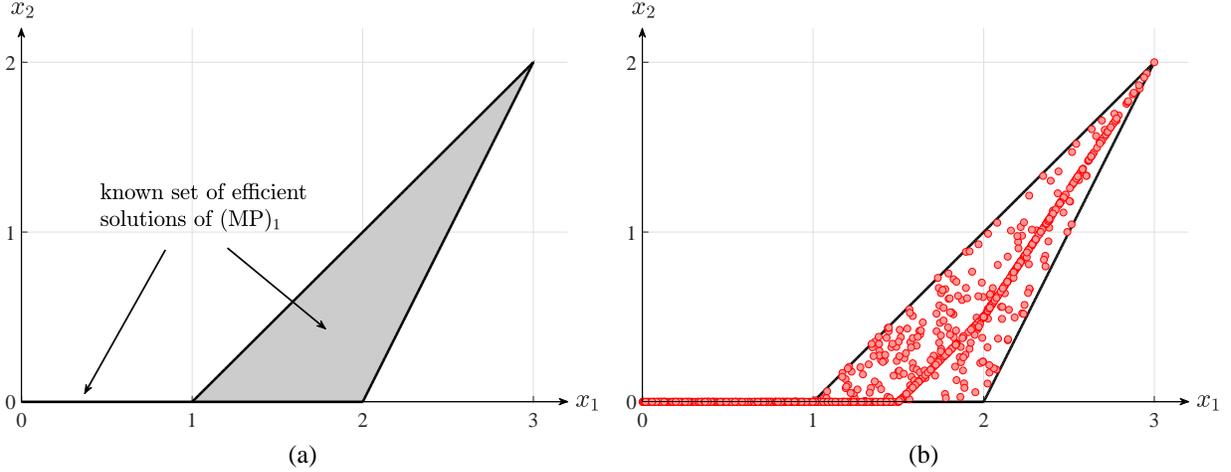}
  \vspace{-12mm}
  \caption{{\bf Efficient solution set comparison.} (a) Efficient solution set to $({\rm MP})_1$ (see, e.g., \cite[Example~2 in Chapter~6]{Chankong1983}). (b) The obtained efficient solutions to $({\rm MP})_1$ are depicted in red.}\label{fig1}
\end{figure}

In order to find more efficient solutions to $({\rm MP})_1,$ we need to parametrically move $z\in K.$
So, we give $1000$ points $z\in K$ arbitrarily to find more efficient solutions and Figure~\ref{fig1} shows that, for the given $1000$ points $z,$ all of obtained efficient solutions to $({\rm MP})_1$ belongs to the known set of efficient solutions to $({\rm MP})_1.$
}\end{example}

\begin{remark}
{\rm Note that from a theoretical point of view, if we move $z$ all around $K$ and {\em fixed} any chosen $\lambda$ from ${\rm int\,}\R^p_+$, then we can get all efficient solutions to problem~\eqref{modelsetting}; see Giannessi et al. \cite[page 191]{Giannessi2000}.
Indeed, in the case of our paper, this conclusion could be guaranteed if we combine Proposition~\ref{proposition1} and the next Proposition~\ref{forallsol.}.}
\end{remark}

\begin{proposition}\label{forallsol.}
  Let $\lambda \in {\rm int\,}\R^p_+$ be fixed.
  Then the following statements are equivalent$:$
  \begin{itemize}
    \item[{\rm (i)}] $\bar x\in K$ is an efficient solution to problem~\eqref{modelsetting}$;$
    \item[{\rm (ii)}] the system {\rm (}in the unknown $x${\rm ):}
  \begin{equation}\label{systemnosolution2}
\left\{
                \begin{array}{lll}
                  \sum_{j=1}^p\lambda_jf_j(x) < \sum_{j=1}^p\lambda_jf_j(\bar x), \\
                  f_j(x) \leq f_j(\bar x),\ \textrm{\rm for all} \ j = 1, \ldots, p, \\
                  g_i(x) \leq 0,\ i = 1, \ldots, m,
                \end{array}
              \right.
\end{equation}
has no solution.
  \end{itemize}
Furthermore$,$ the impossibility of \eqref{systemnosolution2} is a necessary and sufficient condition for $\bar x$ to be an optimal solution to problem~\eqref{hybridmodel}.
\end{proposition}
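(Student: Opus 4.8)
The plan is to establish the two-way equivalence (i) $\Leftrightarrow$ (ii) by contraposition in each direction, and then to read off the \textbf{Furthermore} clause as a direct reformulation of optimality for problem~\eqref{hybridmodel} with the parameter chosen as $z=\bar x$. The one structural fact that powers both implications is the strict positivity of \emph{every} component of $\lambda$, which holds because $\lambda\in{\rm int}\,\R^p_+$; this is exactly what lets me pass back and forth between componentwise domination in $-\R^p_+$ and the scalar strict inequality on the weighted sum.

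First I would prove $[{\rm (i)}\Rightarrow{\rm (ii)}]$. Assuming $\bar x$ is efficient, I suppose for contradiction that some $x$ solves the system~\eqref{systemnosolution2}. The inequalities $g_i(x)\le0$ place $x\in K$, while $f_j(x)\le f_j(\bar x)$ for all $j$ gives $f(x)-f(\bar x)\in-\R^p_+$. The crucial point is that the \emph{strict} weighted inequality $\sum_{j=1}^p\lambda_jf_j(x)<\sum_{j=1}^p\lambda_jf_j(\bar x)$ rules out $f(x)=f(\bar x)$, since otherwise the two weighted sums would coincide; hence $f(x)-f(\bar x)\in-\R^p_+\backslash\{0\}$ with $x\in K$, contradicting the definition of efficiency. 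Therefore \eqref{systemnosolution2} has no solution.

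Next I would prove $[{\rm (ii)}\Rightarrow{\rm (i)}]$ contrapositively. If $\bar x\in K$ fails to be efficient, there is $x\in K$ with $f(x)-f(\bar x)\in-\R^p_+\backslash\{0\}$, i.e.\ $f_j(x)\le f_j(\bar x)$ for all $j$ and $f_{j_0}(x)<f_{j_0}(\bar x)$ for at least one index $j_0$. Multiplying each of these by $\lambda_j>0$ and summing yields $\sum_{j=1}^p\lambda_jf_j(x)<\sum_{j=1}^p\lambda_jf_j(\bar x)$, the strictness surviving because of the $j_0$ term; together with $g_i(x)\le0$ (from $x\in K$) this exhibits $x$ as a solution of~\eqref{systemnosolution2}. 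This step is precisely where $\lambda\in{\rm int}\,\R^p_+$ is indispensable: a merely nonnegative weight vector could put zero mass on the strict component $j_0$ and destroy the strict scalar inequality, so the equivalence would fail.

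For the \textbf{Furthermore} clause I would observe that a point $x$ is feasible for $({\rm P}_{\bar x})$ exactly when $g_i(x)\le0$ for all $i$ and $f_j(x)\le f_j(\bar x)$ for all $j$, i.e.\ $x\in K_{\bar x}$, and that $\bar x$ itself is feasible since $\bar x\in K$ and $f_j(\bar x)\le f_j(\bar x)$. Consequently the non-existence of a feasible $x$ with $\lambda^Tf(x)<\lambda^Tf(\bar x)$---which is literally the impossibility of~\eqref{systemnosolution2}---is by definition the statement that $\bar x$ minimizes $\lambda^Tf$ over $K_{\bar x}$, that is, $\bar x$ is an optimal solution to~\eqref{hybridmodel} at $z=\bar x$. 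I do not anticipate any genuine obstacle in this argument; the only care required is the bookkeeping in translating between membership in $-\R^p_+\backslash\{0\}$ and the single strict scalar inequality, which rests entirely on the positivity of the weights $\lambda_j$.
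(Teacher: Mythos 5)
Your proof is correct. The paper itself states this proposition without proof (it is essentially imported from Giannessi et al., cited just above at page 191), so there is no in-paper argument to compare against; your reasoning --- translating between $f(x)-f(\bar x)\in-\R^p_+\setminus\{0\}$ and the strict scalar inequality using the strict positivity of every $\lambda_j$, and reading the \emph{Furthermore} clause as the literal definition of optimality of $\bar x$ for $({\rm P}_{\bar x})$ over the feasible set $K_{\bar x}$ --- is exactly the standard scalarization argument this result rests on, and every step is sound.
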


\section{Conclusions and further remarks} \label{sect:6}

In this paper, we mainly investigated the issue that how to find efficient solutions of a multi-objective optimization problem with {\it convex polynomial data} by using the well-known hybrid method and the SDP relaxation approach.
To this end, an existence result for efficient solutions to \eqref{modelsetting} under some mild assumption was firstly established.
Then, two kinds of representations of non-negativity of convex polynomials over convex semi-algebraic sets were formulated, and two kinds of Lasserre-type hierarchies of SDP relaxations for problem \eqref{hybridmodel} with their finite convergence results were also discussed.
Finally, we showed a scheme on finding efficient solutions to the problem \eqref{modelsetting} by solving hierarchies of semidefinite programming relaxations and checking a flat truncation condition.

It will be very interesting if we consider a class of multi-objective optimization problems with (not necessary convex) polynomials, and study how to find/approximate its (weakly) efficient solutions.
Note that the existence of efficient solutions for (unconstrained) multi-objective polynomial optimization problems has been recently investigated by Kim et al. \cite{Kim2019}.

\subsection*{Acknowledgments}
The authors would like to express their sincere thanks to Prof. Tien-Son Pham of University of Dalat for his valuable suggestions and warm helps.
They also would like to appreciate the anonymous referees for their very helpful and valuable suggestions and comments for the paper.

\end{document}